\documentclass[letterpaper,11pt]{article}

\usepackage[letterpaper]{geometry}
\usepackage{amsmath, amsthm, amssymb}
\usepackage{graphicx}
\usepackage{fullpage}
\usepackage[pdftitle={Tile},
            pdfauthor={Author},
            pdfkeywords={Keywords}]{hyperref}
\usepackage{mathtools}
\usepackage{bbm}
\usepackage{xspace}

\newtheorem{lemma}{Lemma}
\newtheorem{thm}{Theorem}
\newtheorem{cor}{Corollary}

\newtheorem{fact}{Fact}
\newtheorem{observation}{Observation}
\newtheorem{claim}{Claim}

\newtheorem{myclaim}{Claim}

\newenvironment{proofof}[1]{\noindent{\emph{Proof of {#1}.}}}{}

\newcommand{\initOneLiners}{%
    \setlength{\itemsep}{0pt}
    \setlength{\parsep }{0pt}
    \setlength{\topsep }{0pt}
}

\newcommand{\corner}{\mathcal{C}}
\newcommand{\contCorner}{\mathcal{CC}}

\newcommand{\lpCorner}{\mathcal{C}_{LP}}

\newcommand{\sClosure}{\mathcal{SC}}

\newcommand{\mink}[2]{\gamma_{#1}(#2)}
\newcommand{\minkEmpty}{\gamma}
\newcommand{\R}{\mathbb{R}}

\newcommand{\Z}{\mathbb{Z}}

\newcommand{\pFiniteSupp}[1]{\R^{(#1)}_+}

\DeclareMathOperator{\intr}{int}

\newcommand{\remove}[1]{}

\DeclareMathOperator{\GMI}{GMI}

\newcommand{\conv}{\textrm{conv}}
\DeclareMathOperator{\inte}{int}

\newcommand{\mmnote}[1]{\noindent {\bf Marco: #1}}

\newcommand{\afterThesis}[1]{}

\begin{document}

\title{Characterization of the Split Closure via Geometric Lifting}
\author{Amitabh Basu\footnote{Department of Applied Mathematics and Statistics, Johns Hopkins University.} \and Marco Molinaro\footnote{Department of Industrial and Systems Engineering, Georgia Institute of Technology.}}
\date{}

%
%
%
\maketitle

\begin{abstract}
We analyze {\em split cuts} from the perspective of {\em cut generating functions} via \emph{geometric lifting}. We show that $\alpha$-cuts, a natural higher-dimensional generalization of the $k$-cuts of Cornu\'{e}jols et al., gives all the split cuts for the mixed-integer corner relaxation. As an immediate consequence we obtain that the $k$-cuts are equivalent to split cuts for the 1-row mixed-integer relaxation. Further, we show that split cuts for finite-dimensional corner relaxations are restrictions of split cuts for the infinite-dimensional relaxation. In a final application of this equivalence, we exhibit a family of pure-integer programs whose split closures have arbitrarily bad integrality gap. This complements the mixed-integer example provided by Basu et al [{\em On the relative strength of split, triangle and quadrilateral cuts}, Math. Program. 126(2):281--314, 2011].
%
\end{abstract}




\section{Introduction}

The existing literature on cutting planes for general integer programs can be roughly partitioned into two classes. One class of cutting planes relies on the paradigm of {\em cut-generating functions} (CGFs) \cite{gerardCGF}, whose roots can be traced back to Gomory and Johnson's work in the 70s. \remove{Roughly speaking, in a cut-generating function each cut coefficient is obtained using only local information about the given variable (i.e. the corresponding column in the LP). }Prominent examples in this class are {\em Gomory mixed-integer cuts (GMI), mixed-integer rounding cuts (MIR), reduce-and-split cuts} and \emph{$k$-cuts} \cite{gomoryGMI,NW90,AndersenCL05,CLV}. 
The other class of cutting planes is based on the idea taking convex hulls of disjunctions; this perspective goes back to Balas' work on {\em disjunctive programming}~\cite{balasDisjunctive}. Examples in this class are {\em split cuts, lift-and-project cuts} and, more recently, {\em t-branch disjunctions}~\cite{bcc,tBranch,sherali-adams}.

One important advantage of the cut-generating function paradigm is that often they give closed form formulas to generate cuts that can be computed very efficiently. In contrast, the disjunctive programming paradigm usually involves solving a {\em cut generating linear program} \cite{bcc}, which can be computationally expensive. This leads to a significant blowup in time required to generate cuts using the disjunctive paradigm, as opposed to the cut generating function framework. 
Hence, there is great value in results which show that a certain family of disjunctive cuts can also be obtained by cut generating functions that are computable efficiently.
Moreover, establishing connections between the cut-generating and the disjunctive paradigms contributes to the overarching goal of obtaining a more complete understanding of cutting planes.

In this paper, we analyze the family of {\em split cuts} \cite{CKS} from this viewpoint. Split cuts are one of the most important cuts in practice and their effective use was responsible for a major improvement in MIP solvers since the 90s \cite{AchterbergWunderling13,Bixby}. Due to their importance, these cuts have been extensively studied, both theoretically~\cite{ACL,bcm,NW90} and computationally~\cite{BCCN,balasSaxena,dashGoycoolea,bonami,fischettiSal,SplitApprox}. 

An important tool for the cut generating function approach has been the study of finite and infinite dimensional {\em corner relaxations}. Corner relaxations retain enough of the complexity of mixed-integer programs to be extremely useful models for obtaining general cutting planes, and yet have a structure that yields to mathematical analysis and a beautiful theory has been built about them~\cite{corner_survey}.

In \autoref{sec:prelim}, we formally define the main objects of study of this paper, and then follow up with a statement of our results in \autoref{sec:results}.

\subsection{Preliminaries}\label{sec:prelim}

\subsubsection{Corner Relaxations}\label{s:basic_defs}
		
			Given a point $f \in [0,1]^n \setminus \Z^n$ and sets $R, Q \subseteq \R^n$ (not necessarily finite), define the \emph{($n$-row) generalized corner relaxation} $\corner = \corner(f, R, Q)$ as the set solutions $(x, s, y)$ to the system
			\begin{align}
				x &= f + \sum_{r \in R} r \cdot s(r) + \sum_{q \in Q} q \cdot y(q) \notag\\
				x &\in \Z^n \tag{$\corner(f, R,Q)$} \label{eq:corner}\\
				y(q) &\in \Z \ \ \forall q \in Q \notag\\
				s &\in \pFiniteSupp{R}, y \in \pFiniteSupp{Q}, \notag
			\end{align}
			where $\pFiniteSupp{S}$ denotes the set of non-negative functions $f: S \rightarrow \R_+$ with finite support. This model was first introduced by Johnson in~\cite{johnson}. When $R$ and $Q$ are finite subsets of $\R^n$, the above problem is refered to as the {\em mixed-integer corner polyhedron}. \remove{which was introduced by Gomory in~\cite{corner}. The corner polyhedron has proved to be an extremely important tool for studying general mixed-integer linear programs. {\bf Amitabh : Add references to show evidence of this claim} In this paper, we will simply refer to the problem~\eqref{eq:corner} as the corner relaxation. The $x$ variables are called {\em basic variables} and the $s, y$ vairables are called {\em non-basic} variables.}
	
		We define the {\em continuous corner relaxation} $\contCorner(f, R, Q)$ obtained from~\eqref{eq:corner} by dropping the integrality constraint on the $y$ variables, i.e., all points $(x,s,y)$ that satisfy the first, second and fourth constraints. 
We use $\lpCorner(f, R, Q)$ to denote the linear relaxation of $\corner(f, R, Q)$, namely the set of points $(x, s, y)$ that satisfy the first and fourth set of constraints in \eqref{eq:corner}. Thus we have that $\corner(f,R,Q) \subseteq \contCorner(f,R,Q) \subseteq \lpCorner(f,R,Q)$.
			
	
	
			\subsubsection{Valid cuts.} A \emph{valid cut} or \emph{valid function} for $\corner(f, R, Q)$ (or $\contCorner(f,R,Q)$) is a pair of non-negative functions $(\psi, \pi) \in \R^{R}_+ \times \R^{Q}_+$ (where $\R^{S}_+$ denotes the set of functions from $S$ to $\R_+$)\afterThesis{\mmnote{Define $\R^S$?}} such that for every $(x, s, y) \in \corner(f, R, Q)$ (or $(x, s, y) \in \contCorner(f, R, Q)$ resp.) we have
			\begin{align}
				\sum_{r \in R} \psi(r) s(r) + \sum_{q \in Q} \pi(q) y(q) \ge 1. \label{eq:validCut}
			\end{align}
			We will associate the function pair $(\psi, \pi)$ with the cut defined by the above inequality. With slight abuse of notation, we may use $(\psi, \pi)$ to denote the set of points satisfying \eqref{eq:validCut} (e.g., $\lpCorner(f, R, Q) \cap (\psi, \pi)$). 
		
			\afterThesis{\mmnote{Mention anything about assuming non-negativity in the def of valid functions?}}
			
			Also with slight overload in notation, given sets $R' \supseteq R$ and $Q' \supseteq Q$ and functions $\psi \in \R^{R'}$ and $\pi \in \R^{Q'}$, we say that $(\psi, \pi)$ is a valid cut/function for $\corner(f, R, Q)$ if the restriction $(\psi|_{R}, \pi|_{Q})$ is valid for it. Notice that if $(\psi, \pi)$ is valid for $\corner(f, R', Q')$, then it is valid for the restriction $\corner(f, R, Q)$. 
			

\subsubsection{GMI and $\alpha$-cuts}\label{sec:alpha-def}

We define a family of cut-generating functions for the $n$-row corner relaxation, that we call \emph{$\alpha$-cuts}, which is a natural higher-dimensional generalization of $k$-cuts~\cite{CLV}. Informally, an $\alpha$-cut for an $n$-row corner relaxation is obtained by taking an integer vector $\alpha \in \Z^n$, aggregating the $n$ rows of the problem using the $\alpha$'s as multipliers and then employing the GMI function to the resulting equality.
		
		Given a real number $a \in \R$, let $[a]$ denote its fractional part $a - \lfloor a \rfloor$. Then given $f \in \R$, the \emph{GMI function} \afterThesis{\cite{xx,xx}}$(\psi^f_{\GMI}, \pi^f_{\GMI})$ is defined as
	\begin{equation}\label{eq:defGMIPi}
		\psi^f_{\GMI}(r) = \max\left\{\frac{r}{1-[f]}, -\frac{r}{[f]} \right\}, \qquad\pi^f_{\GMI}(q) = \max\left\{ \frac{[q]}{1-[f]}, \frac{1-[q]}{[f]} \right\}. 
	\end{equation}
	The GMI function is valid for the infinite corner relaxation $\corner(f, \R, \R)$ ($n=1$).
	
	\afterThesis{\mmnote{Add figure of a GMI and a $k$-cut}}
	
	For $f \in [0,1]^n \setminus \Z^n$, define the sets $\Z_f = \{ w \in \Z^n : fw \notin \Z\}$. Given $\alpha \in \Z_f$, we define the \emph{$\alpha$-cut function} $(\psi^f_\alpha, \pi^f_\alpha)$ is defined as	
	\begin{gather}
		\psi^f_{\alpha}(r) = \psi^{\alpha f}_{\GMI}(\alpha r), ~~~~~~~~~~
		\pi^f_{\alpha}(q) = \pi^{\alpha f}_{\GMI}(\alpha q). \label{eq:defAlphaCutPi}
	\end{gather}
	We make the observation here that $\psi^{-f}_{\GMI}(-r) = \psi^f_{\GMI}(r)$ and similarly, $\pi^{-f}_{\GMI}(-q)=\pi^f_{\GMI}(q)$ (since $[-a] = 1 - [a]$). Therefore, $\psi^{f}_{-\alpha} = \psi^f_{\alpha}$ and $\pi^{f}_{-\alpha}(q) = \pi^f_{\alpha}(q)$, and one needs only consider $\alpha \in \Z_f$ lying on one side of any halfspace in $\R^n$. In the case $n = 1$, the $\alpha$-cut functions correspond to the \emph{$k$-cuts} introduced in \cite{CLV}. 
			

	\subsubsection{Split Cuts}
			
		Consider an $n$-dimensional corner relaxation $\corner(f, R, Q)$. Given $\alpha \in \Z_f$ and $\beta : Q \rightarrow \Z$, we define the split disjunction
			\begin{align*}
				D(\alpha, \beta, f) \triangleq \left\{ (x, s, y) : \alpha x + \sum_{q \in Q} \beta(q) y(q) \le \lfloor \alpha f \rfloor \right\} \cup \left\{ (x, s, y) : \alpha x + \sum_{q \in Q} \beta(q) y(q) \ge \lceil \alpha f \rceil \right\}.
			\end{align*}
			\afterThesis{Notice that this is the unique split disjunction with ``normal'' $(\alpha, \beta)$ that contains the point $(f, 0, 0)$.} A cut $(\psi, \pi)$ is a \emph{split cut} for $\corner(f, R, Q)$ with respect to the disjunction $D(\alpha, \beta, f)$ if it is satisfied by all points in the set $\lpCorner(f, R, Q) \cap D(\alpha, \beta, f)$.\footnote{Notice that we only consider $\alpha$ in $\Z_f$, and not in $\Z^n$, because in the latter case $\lpCorner(f, R, Q) \cap D(\alpha, \beta, f) = \lpCorner(f, R, Q)$.}

We analogously define split cuts for $\contCorner(f, R, Q)$. For that, given $\alpha \in \Z_f$, define the split disjunction $$\bar{D}(\alpha, f) = \{(x, s, y) : \alpha x \leq \lfloor \alpha f \rfloor \}\cup \{ (x, s, y) :\alpha x \ge \lceil \alpha f \rceil \}.$$ The cut $(\psi, \pi)$ is a \emph{split cut} for $\contCorner(f, R, Q)$ with respect to the disjunction $\bar{D}(\alpha, f)$ if it is satisfied by all points in $\lpCorner(f, R, Q) \cap \bar{D}(\alpha, f)$. 

Notice that these definitions hold for the case where $R$ and/or $Q$ are infinite, thus generalizing the standard definition of split cuts to this setting. 
	
			\afterThesis{\mmnote{Say that split cuts are very important, etc.}}


\subsection{Statement of Results.}\label{sec:results}
%
	We show that the $\alpha$-cuts give all the split cuts for a generalized mixed-integer corner relaxation, which includes both the finite and infinite dimensional relaxations. 

	\begin{thm} \label{thm:equivalence}
		Consider an $n$-row generalized corner relaxation $\corner$. Then the $\alpha$-cuts are exactly the split cuts for $\corner$. More precisely,
		every $\alpha$-cut is a split cut for $\corner$ and every split cut for $\corner$ is dominated by an $\alpha$-cut.
	\end{thm}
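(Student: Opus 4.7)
My plan is to prove both directions via a single bridge. An $\alpha$-cut is, by construction, the GMI inequality applied to the $1$-row relation obtained by aggregating the $n$ rows of $\corner$ with $\alpha$. A split cut for $D(\alpha, \beta, f)$, on the other hand, has — by LP duality applied on each of the two halves of $\lpCorner \cap D(\alpha, \beta, f)$ — tight coefficients $\psi^{\alpha f}_{\GMI}(\alpha r)$ on $s(r)$ and $\psi^{\alpha f}_{\GMI}(\alpha q + \beta(q))$ on $y(q)$. The link between these two descriptions is the pointwise identity
\[
\pi^{\alpha f}_{\GMI}(\alpha q) \;=\; \max\bigl\{\psi^{\alpha f}_{\GMI}([\alpha q]),\, \psi^{\alpha f}_{\GMI}([\alpha q]-1)\bigr\},
\]
which I would establish by direct inspection: $\psi^{\alpha f}_{\GMI}([\alpha q]) = [\alpha q]/(1-[\alpha f])$ since $[\alpha q] \ge 0$, while $\psi^{\alpha f}_{\GMI}([\alpha q]-1) = (1-[\alpha q])/[\alpha f]$ since $[\alpha q] - 1 \le 0$, and the maximum of these two is precisely $\pi^{\alpha f}_{\GMI}(\alpha q)$ as defined in \eqref{eq:defGMIPi}.

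For the forward direction, fix $\alpha \in \Z_f$ and, pointwise in $q$, define $\beta^*(q) \in \Z$ to be the integer shift that makes $\alpha q + \beta^*(q)$ equal to whichever of $[\alpha q]$ or $[\alpha q]-1$ realizes the max in the identity above (so $\beta^*(q)$ is either $-\lfloor \alpha q\rfloor$ or $-\lfloor \alpha q\rfloor - 1$). Then $\pi^f_\alpha(q) = \psi^{\alpha f}_{\GMI}(\alpha q + \beta^*(q))$ and $\psi^f_\alpha(r) = \psi^{\alpha f}_{\GMI}(\alpha r)$, and these are exactly the coefficients of the tight LP split cut for $D(\alpha, \beta^*, f)$ produced by LP duality on the two halves of the disjunction; hence the $\alpha$-cut is a split cut. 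For the reverse direction, take any split cut $(\psi, \pi)$ for $D(\alpha, \beta, f)$: the same LP-duality analysis on the two halves yields the pointwise bounds $\psi(r) \ge \psi^f_\alpha(r)$ and $\pi(q) \ge \psi^{\alpha f}_{\GMI}(\alpha q + \beta(q))$, and the max-identity then places the $\alpha$-cut with the same multiplier $\alpha$ in the correct relation to $(\psi, \pi)$ to deliver the required domination.

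The main obstacle is carrying out the LP-duality analysis on $\lpCorner \cap D(\alpha, \beta, f)$, which decomposes into two polyhedra that must be treated simultaneously: the subtle cases are those in which one of the two halves is empty in $\lpCorner$ (the continuous relaxation lying entirely on one side of the split hyperplane), so that only one of the two branches of $\psi^{\alpha f}_{\GMI}$ is actually forced by validity and the tight coefficient must be recovered from the surviving half alone. Once that case analysis is dispatched, the max-identity cleanly glues the two $\psi$-values into $\pi^{\alpha f}_{\GMI}(\alpha q)$ and delivers both directions with essentially the same argument.
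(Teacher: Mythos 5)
Your overall strategy---aggregate the rows with $\alpha$, read off the tight coefficients of a split cut for $D(\alpha,\beta,f)$ from single-ray points on the boundary of the disjunction, and relate them to the $\alpha$-cut by optimizing over $\beta$---is viable and is genuinely different from the paper's route (it is essentially the algebraic derivation the authors attribute to Section 2.4 of Andersen et al.). But there is a concrete error that breaks the domination direction: your key identity must be a \emph{minimum}, not a maximum. Your computation of the two branch values is right, $\psi^{\alpha f}_{\GMI}([\alpha q]) = [\alpha q]/(1-[\alpha f])$ and $\psi^{\alpha f}_{\GMI}([\alpha q]-1) = (1-[\alpha q])/[\alpha f]$, but their maximum is always $\ge 1$ (the two linear pieces cross at value $1$), so a $\pi$ defined as that maximum can never satisfy $\pi^f_\alpha \le \pi$ against the tight split cut whose coefficient is $\psi^{\alpha f}_{\GMI}(\alpha q + \beta(q))$ with $\beta(q)$ the branch-minimizing shift; by Lemma~\autoref{lemma:dominanceCorner}, domination with respect to $\lpCorner(f,R,Q)$ \emph{is} pointwise inequality, so the reverse direction fails outright. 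The identity that makes both directions work---and which is forced by Fact~\autoref{fact:GMILifting} and by the explicit formula \eqref{eq:explicitKCut} used later in the paper (the $\max$ in the $\pi$-formula of \eqref{eq:defGMIPi} is a typo for $\min$)---is
\begin{align*}
\pi^{\alpha f}_{\GMI}(\alpha q) \;=\; \min_{w\in\Z}\psi^{\alpha f}_{\GMI}(\alpha q+w)\;=\;\min\bigl\{\psi^{\alpha f}_{\GMI}([\alpha q]),\,\psi^{\alpha f}_{\GMI}([\alpha q]-1)\bigr\}.
\end{align*}
With $\beta^*(q)$ chosen to realize this \emph{minimum}, the $\alpha$-cut is exactly the tight split cut for $D(\alpha,\beta^*,f)$ (forward direction), and any split cut for $D(\alpha,\beta,f)$ satisfies $\pi(q)\ge \psi^{\alpha f}_{\GMI}(\alpha q+\beta(q))\ge \pi^f_\alpha(q)$ (reverse direction). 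As written, your maximizing choice of $\beta^*$ is exactly backwards.

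Two further points on the forward direction. First, the single-ray computation only shows that $\psi^{\alpha f}_{\GMI}(\alpha r)$ and $\psi^{\alpha f}_{\GMI}(\alpha q+\beta^*(q))$ are the \emph{smallest conceivable} coefficients; to conclude that the $\alpha$-cut is a split cut you must still verify validity on all of $\lpCorner(f,R,Q)\cap D(\alpha,\beta^*,f)$, and the LP duality you invoke is not directly available when $R$ or $Q$ is infinite (the theorem covers the generalized relaxation); you would need to reduce to finite-support restrictions or argue via sublinearity. Second, the empty-half degeneracy you flag as the main obstacle is real, and it is precisely what the paper's machinery sidesteps: the paper proves Theorem~\autoref{thm:equivSplitLF} once, by a sublinearity argument on the Minkowski functional of the split set that treats both halves (empty or not) and infinite $R,Q$ uniformly, and then transports everything through the lifting maps $\Gamma_\ell$, $\Gamma^\circ_\ell$ to the $(n+1)$-row continuous relaxation, where the disjunction $D(\alpha,\beta,f)$ becomes the purely geometric disjunction $\bar D((\alpha,1),(f,0))$. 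What your route buys, once the min/max error is fixed, is an explicit closed-form derivation with no lifting formalism; what the paper's route buys is a single case-free convexity argument and a statement that survives verbatim in infinite dimensions.
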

	In particular, this result directly gives the equivalence between $k$-cuts and split cuts for the $1$-row (finite or infinite dimensional) corner relaxations.
	
	The proof of Theorem \autoref{thm:equivalence} is based on \emph{geometric lifting}, a technique introduced in~\cite{ccz}. The cut-generating functions that we obtain can also be derived  using the formulas from Section 2.4 of \cite{AndersenCL05}. However, in our opinion, our geometric viewpoint illuminates aspects of these cut generating functions that are not immediately apparent if one uses the algebraic approach of \cite{AndersenCL05}. We illustrate this advantage using two applications: Theorems~\autoref{thm:universalIR} and~\autoref{thm:badIP} below.
	
 It is known that every cut for the finite-dimensional corner relaxation is a restriction of a cut for the infinite relaxation. We show that, not only does the infinite relaxation encode all finite-dimensional corner relaxations and valid cuts for them, but it also preserves the structure of split cuts. 
	
	\begin{thm}\label{thm:universalIR}
		Consider a non-empty $n$-row generalized corner relaxation $\corner(f, R, Q)$. Then a valid cut for $\corner(f, R, Q)$ is a split cut if and only if it is the restriction of a split cut for the infinite relaxation $\corner(f, \R^n, \R^n)$.
	\end{thm}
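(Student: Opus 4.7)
The proof splits into two directions; the forward direction is a straightforward zero-extension argument, and the reverse direction is a clean reduction to \autoref{thm:equivalence}.

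For the ``if'' direction, suppose $(\bar\psi, \bar\pi)$ is a split cut for the infinite relaxation $\corner(f, \R^n, \R^n)$ with respect to some disjunction $\bar D(\bar\alpha, \bar\beta, f)$. The plan is to show that its restriction $(\bar\psi|_R, \bar\pi|_Q)$ is a split cut for $\corner(f, R, Q)$ with respect to the disjunction $D(\bar\alpha, \bar\beta|_Q, f)$. Given an arbitrary $(x, s, y) \in \lpCorner(f, R, Q) \cap D(\bar\alpha, \bar\beta|_Q, f)$, I would extend $s$ by zero on $\R^n \setminus R$ and $y$ by zero on $\R^n \setminus Q$ to obtain a point in $\lpCorner(f, \R^n, \R^n) \cap \bar D(\bar\alpha, \bar\beta, f)$. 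The defining equation of the corner relaxation, the disjunction membership, and the numerical value of the cut inequality are all preserved under this zero extension, so the validity of $(\bar\psi, \bar\pi)$ on the extended point is precisely the validity of $(\bar\psi|_R, \bar\pi|_Q)$ on the original point.

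For the ``only if'' direction, let $(\psi, \pi)$ be a split cut for $\corner(f, R, Q)$. By \autoref{thm:equivalence} there exists some $\alpha^* \in \Z_f$ whose $\alpha$-cut dominates it, i.e., $\psi^f_{\alpha^*}(r) \le \psi(r)$ for every $r \in R$ and $\pi^f_{\alpha^*}(q) \le \pi(q)$ for every $q \in Q$. Since the formulas in~\eqref{eq:defAlphaCutPi} define $\psi^f_{\alpha^*}$ and $\pi^f_{\alpha^*}$ on all of $\R^n$, I would define an extension $(\bar\psi, \bar\pi) : \R^n \times \R^n \to \R_+ \times \R_+$ by setting $\bar\psi(r) = \psi(r)$ on $R$ and $\bar\psi(r) = \psi^f_{\alpha^*}(r)$ off $R$, and analogously for $\bar\pi$. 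By construction, $\bar\psi \ge \psi^f_{\alpha^*}$ and $\bar\pi \ge \pi^f_{\alpha^*}$ pointwise on $\R^n$. Applying \autoref{thm:equivalence} to the infinite relaxation, the $\alpha^*$-cut is itself a split cut for $\corner(f, \R^n, \R^n)$ with respect to a suitable disjunction $\bar D(\alpha^*, \bar\beta^*, f)$; since $s$ and $y$ are nonnegative, pointwise weakening preserves validity on $\lpCorner(f, \R^n, \R^n) \cap \bar D(\alpha^*, \bar\beta^*, f)$, so $(\bar\psi, \bar\pi)$ is a split cut for the infinite relaxation. Its restriction to $R \cup Q$ is exactly $(\psi, \pi)$ by construction, completing the direction.

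The one place where care is needed is the direction of the ``dominated by'' convention in \autoref{thm:equivalence}: it means the $\alpha^*$-cut is at least as strong as $(\psi, \pi)$, which under non-negative $s, y$ translates into $\psi^f_{\alpha^*}$ and $\pi^f_{\alpha^*}$ having pointwise \emph{smaller} coefficients than $\psi$ and $\pi$. This sign-check is what makes the proposed extension consistent, because then replacing the smaller $\alpha^*$-cut values by the larger $(\psi, \pi)$ values on $R \cup Q$ produces a pointwise weakening of the $\alpha^*$-cut on $\R^n$, to which validity transfers freely. Beyond this bookkeeping I do not expect any genuine analytical obstacle, as the theorem is essentially a direct reduction to \autoref{thm:equivalence} applied in both the finite and the infinite settings.
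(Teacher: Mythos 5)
Your ``only if'' direction is essentially the paper's own proof: invoke Theorem~\autoref{thm:equivalence} to obtain a dominating $\alpha^*$-cut, extend $(\psi,\pi)$ by the values of $(\psi^f_{\alpha^*},\pi^f_{\alpha^*})$ outside $R\cup Q$, and observe that the extension is a pointwise weakening of the $\alpha^*$-cut, hence (again by Theorem~\autoref{thm:equivalence}, now applied to the infinite relaxation) a split cut for $\corner(f,\R^n,\R^n)$ whose restriction is $(\psi,\pi)$. The one step you elide is the phrase ``dominates it, i.e., $\psi^f_{\alpha^*}(r)\le\psi(r)$ for every $r\in R$'': in this paper ``dominates with respect to $\lpCorner(f,R,Q)$'' is defined as a containment of cut sets, not as a pointwise inequality on coefficients, so your ``i.e.'' is precisely the content of Lemma~\autoref{lemma:dominanceCorner}. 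That lemma is exactly where the hypothesis that $\corner(f,R,Q)$ is non-empty (together with the non-negativity of valid functions) enters the argument, so you should invoke it rather than treat the pointwise inequality as definitional. Beyond this, your explicit zero-extension argument for the ``if'' direction is correct and supplies a step the paper leaves implicit, namely that the restriction of a split cut for $\corner(f,\R^n,\R^n)$ with respect to $D(\bar\alpha,\bar\beta,f)$ is a split cut for $\corner(f,R,Q)$ with respect to $D(\bar\alpha,\bar\beta|_Q,f)$.
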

	This result relies on the fact that the characterization of Theorem \autoref{thm:equivalence} holds for the infinite relaxation.  
	
	In addition, we use insights obtained from this characterization to better understand the strength of split cuts. Cook, Kannan and Schrijver constructed an example of a \emph{mixed-integer} program with infinite split rank \cite{CKS}. Reinforcing the potential weakness of the split closure, Basu et al. \cite{basuStrength} constructed \emph{mixed-integer} programs whose split closures provide an arbitrarily weak approximation to their integer hull. On the other hand, it is known that every \emph{pure-integer} program (where all variables are integer valued) has finite split rank \cite{finiteCG}. Nonetheless, in our final contribution we construct an explicit family of pure-integer programs whose split closure is arbitrarily bad compared to the integer hull.

	\begin{thm} \label{thm:badIP}
		For every rational $\epsilon > 0$, there is a pure-integer corner relaxation $\corner_\epsilon \subseteq \R^3$ whose split closure $\sClosure_\epsilon$ has the property that
		\begin{align*}
			\frac{\min\{ y_1 + y_2 : (x,y_1, y_2) \in \corner_\epsilon\}}{\min\{y_1 + y_2 : (x,y_1, y_2) \in \sClosure_\epsilon\}} \ge \frac{1}{12\epsilon}.
		\end{align*}
	\end{thm}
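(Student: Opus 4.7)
The plan is to exhibit an explicit 1-row pure-integer corner relaxation in $\R^3$ whose split closure can be computed exactly via Theorem~\ref{thm:equivalence} (specialized to the 1-row case). Given rational $\epsilon>0$, I would let $q$ be an odd integer with $q \ge 1/(12\epsilon)$ and define
\[
\corner_\epsilon \ :=\ \corner\bigl(f,\,\emptyset,\,\{q_1,q_2\}\bigr),\qquad f = 1/q,\ \ q_1 = f-1,\ \ q_2 = f+1.
\]
The key design choice is that $q_1-f,q_2-f\in\Z$, so for every $\alpha\in\Z$ the fractional parts satisfy $[\alpha q_1]=[\alpha q_2]=[\alpha f]$. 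This forces every $\alpha$-cut to be perfectly symmetric in $(y_1,y_2)$, collapsing the entire split closure to a one-parameter family of inequalities on the sum $y_1+y_2$.

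First I would compute the integer optimum. Writing $x = (1+y_1+y_2)/q + (y_2-y_1)$ and noting $y_2-y_1\in\Z$ automatically, the constraint $x\in\Z$ reduces to $q \mid (1+y_1+y_2)$, which together with $y_1,y_2\in\Z_+$ forces $y_1+y_2 \ge q-1$, a value attained at $(y_1,y_2)=(q-1,0)$.

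Next I would simplify the $\alpha$-cuts. For $\alpha \in \Z_f = \{\alpha \in \Z : q \nmid \alpha\}$, set $u := [\alpha f] = [\alpha/q] \in \{k/q : 1\le k\le q-1\}$. The coincidence of fractional parts and \eqref{eq:defAlphaCutPi} give $\pi^f_\alpha(q_1) = \pi^f_\alpha(q_2) = \max\{u/(1-u),(1-u)/u\}$, so the $\alpha$-cut reduces to
\[
y_1+y_2 \ \ge\ \frac{\min(u,1-u)}{\max(u,1-u)}.
\]
By Theorem~\ref{thm:equivalence} the split closure is the intersection of the LP relaxation with exactly these inequalities, since any other split cut (e.g.\ from a disjunction $D(\alpha,\beta,f)$ with $\beta\ne 0$) is dominated by some $\alpha$-cut. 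Hence the minimum of $y_1+y_2$ over the split closure equals $\max_u \min(u,1-u)/\max(u,1-u)$ over $u\in\{k/q : 1\le k\le q-1\}$. Because $q$ is odd, the value $u = 1/2$ is unattainable, and the maximum is attained at $k=(q\pm 1)/2$, giving $(q-1)/(q+1)$.

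The desired ratio is then $(q-1) \big/ \bigl((q-1)/(q+1)\bigr) = q+1 \ge 1/(12\epsilon)$, as required. The main obstacle is conceptual rather than computational: one must invoke Theorem~\ref{thm:equivalence} to rule out any sharper split cut coming from the more general disjunctions $D(\alpha,\beta,f)$ with $\beta\ne 0$, reducing the split closure to the tractable symmetric family above. The oddness of $q$ is also essential---if $q$ were even, taking $\alpha=q/2$ would produce the cut $y_1+y_2 \ge 1$ and collapse the ratio to $O(1)$.
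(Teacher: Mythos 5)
Your proof is correct: I checked the integer optimum (the constraint $x\in\Z$ reduces to $q\mid 1+y_1+y_2$, giving optimum exactly $q-1$), the identity $[\alpha q_1]=[\alpha q_2]=[\alpha f]$, the resulting form $\pi^f_\alpha(q_i)=\max\{u/(1-u),(1-u)/u\}$ with $u=[\alpha f]$, and the split-closure optimum $(q-1)/(q+1)$ for odd $q$. You use the same pivotal reduction as the paper, namely Theorem \autoref{thm:equivalence} in the form of equation \eqref{eq:splitClosureKCuts}, which replaces the split closure by the intersection of the $\alpha$-cuts; but your instance and its analysis are genuinely different. The paper takes $\bar{f}=1/2$ with columns $1/2+\epsilon/2$ and $1/2+\epsilon$, and its key step is a case analysis showing that for every odd $\alpha$ at least one of $[\alpha\bar{q}^1],[\alpha\bar{q}^2]$ lies in $[1/6,5/6]$, whence $\max\{\pi^{\bar{f}}_\alpha(\bar{q}^1),\pi^{\bar{f}}_\alpha(\bar{q}^2)\}\ge 1/3$ and the point $(y_1,y_2)=(3,3)$ survives every $\alpha$-cut; the integer optimum is then lower-bounded by $1/(2\epsilon)$ through a divisibility argument, so both bounds are only approximate and the constant $12$ comes from combining them. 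Your choice $q_1-f,\,q_2-f\in\Z$ instead forces every coefficient $\pi^f_\alpha(q_i)$ to be at least $1$, so each $\alpha$-cut is implied by $y_1+y_2\ge 1$ and the split-closure optimum is exactly $(q-1)/(q+1)<1$, while the integer optimum is exactly $q-1$. This buys an exact gap of $q+1$ and the sharper qualitative conclusion that the split closure cannot even certify the trivially valid inequality $y_1+y_2\ge 1$; your remark that oddness of $q$ is essential is also correct.
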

	

\remove{
 	. We briefly summarize our contributions before we introduce the notation and terminology to make them more precise.
\begin{enumerate}
\item We describe a new cut generating function for the mixed-integer corner polyhedron, as a higher dimensional generalization of the family of cutting planes called $k$-cuts~\cite{CLV}. We show that the cutting plane family generated by this cut generating function is precisely set of all the split cuts for the mixed-integer corner polyhedron. This is shown in Theorem~\autoref{thm:equivalence}. We argue that this cut generating function is efficiently computable. Another consequence of our result is the equivalence of split cuts and the traditional $k$-cuts for the mixed-integer corner polyhedron with a single equality constraint.

\item An important concept for analyzing cut generating functions for the corner polyhedron is the {\em mixed-integer infinite relaxation}, first introduced by Johnson~\cite{johnson} inspired by the work in~\cite{infinite},\cite{infinite2}. We consider the natural generalization of the concept of a split cut for this infinite dimensional problem (first introduced in~\cite{wedgeCuts} - {\bf Amitabh: Is this the first time this was introduced ?}), and show the equivalence from 1. above in this more general setting. Moreover, we show that embedding the finite-dimensional problem into the infinite dimensional problem preserves the structure of split cuts. This is made precise in Theorem~\autoref{thm:universalIR}.

\item As an application of 1. above, we construct an explicit family of pure integer programs (one where all variables are integer valued) whose split closure is arbitrarily bad compared to the integer hull. We formalize this in Theorem~\autoref{thm:badIP}. This complements a result of Basu et al.~\cite{strength} for the mixed-integer case.
\item We illustrate the use of {\em geometric lifting} as a proof strategy in the above results, which we hope will be useful in other contexts.
\end{enumerate}
\vspace{-10pt}
}


\paragraph{Comparison with MIR functions} 
Mixed-integer rounding (equivalently, the Gomory mixed-integer) functions can be seen as a family of cut generating functions that completely describe all split cuts for finite-dimensional mixed-integer corner polyhedra \cite{NW90}. MIR functions are parametrized by tuples of rational numbers; in contrast, our $\alpha$-cuts are parameterized by tuples of integers. This property ascribes an additional simplicity on the cut-generating functions that we obtain. 

	

\section{Lattice-free cuts and split cuts}
		
		\paragraph{Lattice-free split cuts} Now we define the crucial concept of \emph{lattice-free cuts}. First, given a convex body $S \subseteq \R^n$ containing the origin in its interior, define the function \begin{equation}\label{eq:mink}\mink{S}{r} = \inf \{\frac{1}{\lambda} > 0 : \lambda r \in S\}.\end{equation} A convex body $S\subseteq \R^n$ is said to be {\em lattice-free} if $\Z^n \cap \inte(S) = \emptyset$. Given a lattice-free convex  body $S \subseteq \R^n$ containing $f$ in its interior, we define the function $(\psi_S, \pi_S)$ by setting $\psi_S(r) = \pi_S(r) = \mink{S - f}{r}$ for all $r\in \R^n$. The cut $(\psi_S, \pi_S)$ is valid for $\contCorner(f, \R^n, \R^n)$. We call these \emph{lattice-free} cuts.

We define a class of lattice-free cuts using a special kind of lattice-free convex set. Given an integer vector $\alpha \in \Z^n$, we define the \emph{lattice-free split} $S(\alpha, f)$ as
		\begin{align*}
			S(\alpha, f) = \{ x \in \R^n : \lfloor \alpha f \rfloor \le  \alpha x \le \lceil \alpha f \rceil\}.
		\end{align*}
		The set $S(\alpha, f)$ is convex and lattice-free, and notice that whenever $\alpha \in \Z_f$ it contains $f$ in its interior; in this case, $(\psi_{S(\alpha, f)}, \pi_{S(\alpha, f)})$ is a valid cut for $\contCorner(f, R, Q)$. We call this cut a \emph{lattice-free split cut}.
		
		To simplify the notation define $\bar{S}(\alpha, f) = S(\alpha,f) - f$ as the ``centered'' version of $S(\alpha,f)$, so that $\psi_{S(\alpha,f)} = \pi_{S(\alpha,f)} = \minkEmpty_{\bar{S}(\alpha,f)}$. It is well-known that $\minkEmpty_{\bar{S}(\alpha,f)}$ (and thus $\psi_{S(\alpha,f)}$ and $\pi_{S(\alpha,f)}$) are sublinear functions, i.e., $\minkEmpty_{\bar{S}(\alpha,f)}(0) = 0$, $\minkEmpty_{\bar{S}(\alpha,f)}(r^1 + r^2) \leq \minkEmpty_{\bar{S}(\alpha,f)}(r^1) + \minkEmpty_{\bar{S}(\alpha,f)}(r^2)$ for all $r^1, r^2 \in \R^n$, and $\minkEmpty_{\bar{S}(\alpha,f)}(t r) = t \minkEmpty_{\bar{S}(\alpha,f)}(r)$ for all $t \in \R$ and $r \in \R^n$. 

	\paragraph{Equivalence of split and lattice-free split cuts} We will use the fact that, for the continuous corner relaxation, split and lattice-free split cut are equivalent. 
The finite-dimensional version of this result is due to Balas~\cite{intersectionCuts} (see also Section 4.9 in~\cite{cornuejols-survey}).
		
\begin{thm} \label{thm:equivSplitLF}
	For any continuous corner relaxation $\contCorner(f, R, Q)$ and $\alpha \in \Z^n$,	
$$\lpCorner(f, R, Q) \cap (\psi_{S(\alpha, f)}, \pi_{S(\alpha,f)}) = \conv\left( \lpCorner(f, R, Q) \cap \bar{D}(\alpha, f)\right).$$
		\end{thm}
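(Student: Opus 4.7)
The plan is to prove the two set inclusions separately; throughout, I abbreviate $\psi := \psi_{S(\alpha, f)}$ and $\pi := \pi_{S(\alpha, f)}$, both equal to $\minkEmpty_{\bar{S}(\alpha, f)}$ by construction. For the easier direction $\conv(\lpCorner \cap \bar{D}(\alpha, f)) \subseteq \lpCorner \cap (\psi, \pi)$, note that the right-hand side is convex (intersection of two convex sets), so it suffices to verify $\lpCorner \cap \bar{D}(\alpha, f) \subseteq \lpCorner \cap (\psi, \pi)$. Any $(x, s, y)$ in the left-hand side has either $\alpha x \le \lfloor \alpha f \rfloor$ or $\alpha x \ge \lceil \alpha f \rceil$, so $x - f \notin \inte(\bar{S}(\alpha, f))$ and hence $\minkEmpty_{\bar{S}(\alpha, f)}(x - f) \ge 1$. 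Combining this with $x - f = \sum_r s(r) r + \sum_q y(q) q$ and the sublinearity of $\minkEmpty_{\bar{S}(\alpha, f)}$ immediately yields $\sum_r \psi(r) s(r) + \sum_q \pi(q) y(q) \ge 1$, as required.

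For the reverse inclusion, take $(x, s, y) \in \lpCorner \cap (\psi, \pi)$. If it already lies in $\bar{D}(\alpha, f)$ there is nothing to prove, so assume $x$ is in the interior of $S(\alpha, f)$. Set $T := \sum_r \psi(r) s(r) + \sum_q \pi(q) y(q) \ge 1$, and let $R', Q'$ be the subsets of the supports of $s$ and $y$ on which $\psi, \pi$ are strictly positive. For each $r \in R'$ define $\lambda_r := T/\psi(r)$ and let $P_r \in \lpCorner$ be the point whose $x$-coordinate is $f + \lambda_r r$, whose $s$-function assigns $\lambda_r$ to $r$ and zero elsewhere, and whose $y$-function is identically zero; define $P_q$ for $q \in Q'$ analogously. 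Because $\lambda_r \psi(r) = T \ge 1$ and $\psi = \minkEmpty_{\bar{S}(\alpha, f)}$, the $x$-coordinate of $P_r$ lies outside the interior of $S(\alpha, f)$, so $P_r \in \bar{D}(\alpha, f)$; similarly for $P_q$. With the weights $\mu_r := s(r) \psi(r)/T$ and $\mu_q := y(q) \pi(q)/T$, a direct computation using $\mu_r \lambda_r = s(r)$ and $\sum_r \mu_r + \sum_q \mu_q = 1$ shows that $\sum_r \mu_r P_r + \sum_q \mu_q P_q$ reproduces the ``non-neutral'' portion of $(x, s, y)$.

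The main technical obstacle is the treatment of \emph{neutral rays} $r$ (resp.\ $q$) with $\alpha r = 0$ (resp.\ $\alpha q = 0$), for which $\psi(r) = 0$ and the scaling $\lambda_r$ is undefined. If every ray in the support of $(s, y)$ were neutral, then $T$ would equal $0$, contradicting $T \ge 1$; hence $R' \cup Q'$ is non-empty. I would absorb the neutral contribution by aggregating all neutral rays into a single tail vector $(u_0, s_0, y_0)$, which by construction satisfies $\alpha u_0 = 0$, and shifting each $P_r$ and $P_q$ in the decomposition above by adding this tail. Because the shift lies in the kernel of $\alpha$, the shifted points remain in both $\lpCorner$ and $\bar{D}(\alpha, f)$; and because the weights $\mu_r, \mu_q$ sum to $1$, the tail is added exactly once to the overall convex combination, compensating for the missing neutral part and yielding the desired representation of $(x, s, y)$ as a convex combination of points in $\lpCorner \cap \bar{D}(\alpha, f)$.
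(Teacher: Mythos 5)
Your proposal is correct and follows essentially the same route as the paper's proof: the easy inclusion via convexity, sublinearity of the Minkowski functional, and the observation that a point violating the cut would have $x \in \inte(S(\alpha,f))$; and the hard inclusion via the same decomposition, with your $T$, $P_r$, $\mu_r$ and aggregated neutral tail corresponding exactly to the paper's $\nu$, $U^r$, $\lambda_r/\nu$ and $\Delta$, and your kernel-of-$\alpha$ observation playing the role of the paper's Claim~1. The only cosmetic difference is your (unused and harmless) initial reduction to the case $x \in \inte(S(\alpha,f))$.
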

		
\begin{proof}
For ease of notation, we denote $ \lpCorner = \lpCorner(f, R, Q)$, $S = S(\alpha, f)$  and $\bar S = \bar{S}(\alpha, f)$ in the remainder of this proof. 
	
		$(\supseteq)$ Since $\lpCorner \cap (\psi_S, \pi_S)$ is convex, it suffices to show that $\lpCorner \cap \bar{D}(\alpha, f)\subseteq \lpCorner \cap (\psi_S, \pi_S)$; so it suffices to show that $\lpCorner \cap \bar{D}(\alpha, f)$ satisfies $(\psi_S, \pi_S)$.
		
		Take $(\bar{x}, \bar{s}, \bar{y}) \in \lpCorner  \cap \bar{D}(\alpha, f)$. Suppose by contradiction that 
		\begin{align}\label{eq:strictineq}
			\sum_{r \in R} \bar{s}_r \psi_S(r) + \sum_{q \in Q} \bar{y}_q \pi_S(q) < 1.
		\end{align}
		
		Using the sublinearity of $\minkEmpty_{\bar{S}(\alpha,f)} = \psi_S =\pi_S$ and the fact that $\bar{s}$ and $\bar{y}$ have finite support, we obtain from \eqref{eq:strictineq} that $$\minkEmpty_{\bar{S}(\alpha,f)}(\sum_{r \in R} \bar{s}_r r +  \sum_{q \in Q} \bar{y}_q q) < 1.$$ Since $\minkEmpty_{\bar{S}(\alpha,f)}$ is the Minkowski functional of $\bar{S}$, this shows that $\bar x - f = \sum_{r \in R} \bar{s}_r r +  \sum_{q \in Q} \bar{y}_q q \in \intr(\bar{S})$. In other words, $\bar x \in \intr(S)$. This is a contradiction to the choice $(\bar{x}, \bar{s}, \bar{y}) \in \lpCorner  \cap \bar{D}(\alpha, f)$.

		 	\medskip
		 	
		 	$(\subseteq)$ Take a point $(\bar{x}, \bar{s}, \bar{y}) \in \lpCorner \cap (\psi_S, \pi_S)$; we show that it belongs to $\conv(\lpCorner \cap \bar{D}(\alpha, f))$. 
			\begin{claim} \label{claim:belongsS}
		Let $S(\alpha, f)$ be a lattice-free split for some $\alpha \in \Z_f$. Let $r^*$ be such that $\psi_{S(\alpha,f)}(r^*) = \pi_{S(\alpha, f)}(r^*) = 0$. Then a point $\tilde{x} \in \R^n$ belongs to $\inte(S(\alpha, f))$ if and only if $\tilde{x} + r^*$ belongs to $\inte(S(\alpha, f))$.
	\end{claim}
	
	\begin{proof}
		By definition $S(\alpha, f) = \{x : \lfloor \alpha f \rfloor \le  \alpha x \le \lceil \alpha f \rceil\}$. By definition of the Minkowski functional, $\psi_{S(\alpha, f)}(r^*) = 0$ implies that $r^*$ is in the recession cone of $S(\alpha, f)$. Thus, $\alpha r^* = 0$. Then $\tilde{x}$ belongs to $\inte(S(\alpha, f))$ iff $\lfloor \alpha f \rfloor< \alpha\tilde{x} < \lceil \alpha f \rceil$, which happens iff $\lfloor \alpha f \rfloor< \alpha(\tilde{x} + \lambda r^*) < \lceil \alpha f \rceil$ for all $\lambda \in \R$, which is equivalent to $(\tilde{x} + \lambda r^*) \in \inte(S(\alpha, f))$ for all $\lambda \in \R$. \end{proof}
%
			
			Define $R' = \{r \in R: \psi_S(r) > 0\}$ and $Q' = \{q \in Q : \pi_S(q) > 0\}$. Also define $\lambda_r = \bar{s}_r \psi_S(r)$ for every $r \in R$ and $\mu_q = \bar{y}_q \pi_S(q)$ for every $q \in Q$, and let $\nu = \sum_{r \in R'} \lambda_r + \sum_{q \in Q'} \mu_q$.
		 	
			
Notice that since $(\bar{x}, \bar{s}, \bar{y})$ satisfies $(\psi_S, \pi_S)$, we have $\nu =  \sum_{r \in R'} \lambda_r + \sum_{q \in Q'} \mu_q = \sum_{r \in R'} \bar{s}_r \psi_S(r) + \sum_{q \in Q'} \bar{y}_q \pi_S(q) = \sum_{r \in R} \bar{s}_r \psi_S(r) + \sum_{q \in Q} \bar{y}_q \pi_S(q) \ge 1$ (the third equality follows from the fact that $\psi_S(r) = 0$ for all $r \in R\setminus R'$ and $\pi_S(q) = 0$ for all $q \in Q \setminus Q'$, and the inequality follows from the fact that $(\bar{x}, \bar{s}, \bar{y})$ satisfies $(\psi_S, \pi_S)$). For every $r \in R'$, define the point $$U^r = \left(f + \frac{\nu r}{\psi_S(r)}, \left(\frac{\nu}{\psi_S(r)}\right) \cdot 1_r , 0\right)$$ where $1_r$ denotes the indicator function of $r$. Let $u^r = f + \frac{\nu r}{\psi_S(r)}$ be its first component ; similarly, for every $q \in Q'$ define $$V^q = \left(f + \frac{\nu q}{\pi_S(q)}, 0, \left(\frac{\nu}{\pi_S(q)}\right) \cdot 1_q\right)$$ and let $v^q = f + \frac{\nu q}{\pi_S(q)}$ be its first component. 
		 	
		 	Notice that all $U^r$ and $V^q$ belong to $\lpCorner$; moreover, using the definition of $\psi_S$ and $\pi_S$ and the fact that $\nu \ge 1$, we have that $u^r \notin \inte(S)$ and $v^q \notin \inte(S)$ for all $r \in R'$ and $q \in Q'$. Therefore, the points $U^r$ and $V^q$ belong to $\lpCorner  \cap \bar{D}(\alpha, f)$. 
		 	
		 	Now define \remove{in hindsight} $$\Delta = \left(\sum_{r \in R \setminus R'} \bar{s}_r r + \sum_{q \in Q \setminus Q'} \bar{y}_q q, \ \ \ \bar{s}|_{R \setminus R'}, \ \ \ \bar{y}|_{Q \setminus Q'}\right).$$ 
		Since $\psi_S = \pi_S$ are sub linear and nonnegative, we have that $0 \leq \psi_S(\sum_{r \in R \setminus R'} \bar{s}_r r + \sum_{q \in Q \setminus Q'} \bar{y}_q q) \leq \sum_{r \in R \setminus R'} \bar{s}_r \psi_S(r) + \sum_{q \in Q \setminus Q'} \bar{y}_q \pi_S(q) = 0$, where the equality follows from the fact that $\psi_S(r) = 0$ for all $r \in R\setminus R'$ and $\pi_S(q) = 0$ for all $q \in Q \setminus Q'$. Thus, the value of $\psi_S$ on the first component of $\Delta$ is zero. So Claim~\autoref{claim:belongsS} implies that $U^r + \Delta \in \lpCorner  \cap \bar{D}(\alpha, f)$ for all $r\in R'$ and $V^q + \Delta \in \lpCorner  \cap \bar{D}(\alpha, f)$ for all $q \in Q'$.
		
		Now notice that $\sum_{r \in R'} \frac{\lambda_r}{\nu}  + \sum_{q \in Q'} \frac{\mu_q}{\nu} = 1$ and we can write $(\bar{x}, \bar{s}, \bar{y})$ as follows:
		 	\begin{align}
		 		(\bar{x}, \bar{s}, \bar{y}) = \sum_{r \in R'} \frac{\lambda_r}{\nu} U^r + \sum_{q \in Q'} \frac{\mu_q}{\nu} V^q + \Delta = \sum_{r \in R'} \frac{\lambda_r}{\nu} (U^r+\Delta) + \sum_{q \in Q'} \frac{\mu_q}{\nu} (V^q + \Delta).
		 	\end{align}
Therefore, $(\bar{x}, \bar{s}, \bar{y})$ belongs to $\conv(\lpCorner \cap \bar{D}(\alpha, f))$.\end{proof}

		
\remove{
\section{Formal Statement and Discussion of Main Results}	

		
	We remark that this result is similar in spirit to the equivalence of split cuts, GMI cuts and MIR cuts (see for instance \cite{cornuejolsLi}, \cite{DDG10a}, \cite{NW90}). \remove{One might be able to recover Theorem \autoref{thm:equivalence} for \emph{finite-dimensional} corner relaxations by carefully inspecting these available proofs (notice, for instance, that the multipliers $\alpha$ that we use form a subset of the \emph{integral} vectors $\Z^n$). }In contrast to these works, our proof does not rely on linear programming duality, and is able to naturally handle the case of infinite-dimensional relaxations.
	
	
	\mmnote{Mention something about Remark \autoref{remark:liftSplit} here?}
	
	As a consequence of the generality of the above characterization, we can establish a connection between split cuts for finite-dimensional corner relaxations and split cuts for the mixed-integer infinite relaxation $\corner(f, \R^n, \R^n)$. This result illustrates the universality of the latter: not only does the mixed-integer infinite relaxation encodes all corner polyhedra, but it also preserves the structure of split cuts.


	
	\afterThesis{
	\mmnote{It seems that we can prove the following, possibly even when $R = Q$ forms a group}
	
	\begin{thm}
		Every $k$-cut is extreme for $\corner(f, \R, \R)$.
	\end{thm}
	
	\mmnote{If the above is true, it shows that we cannot improve over split cuts uniformly (that is, find a cut which dominates it in the infinite relaxation), not even by considering the more general $t$-branch disjunction. Despite this indication of the strength of split cuts, the next result shows that for very simple finite problems split cuts can be very weak.} }

%

}


%

		\section{A Geometric Lifting}
	
		The idea behind the lifting of cuts is to first obtain a (weaker) cut by ignoring some of the integrality of the variables and then strengthen it by incorporating back some of the discarded information. In our context, we can formally define lifting as follows: given a cut $(\psi, \pi)$ valid for $\contCorner(f, R, Q)$, we say that a cut $(\psi', \pi')$ valid for $\corner(f, R, Q)$ is a \emph{lifting} of $(\psi, \pi)$ if $\psi' = \psi$ and $\pi' \le \pi$. For example, the GMI function introduced in \autoref{s:basic_defs} can be seen as a lifting of a lattice-free cut, a fact that will be very important for our developments.
		
		\begin{fact}[\cite{ccz}] \label{fact:GMILifting}
			The GMI function $(\psi_{GMI}^f, \pi_{GMI}^f)$ is the lifting of the lattice-free cut $(\psi_{S(1,f)}, \pi_{S(1,f)})$. More precisely, $$\psi_{GMI}^f(r) = \psi_{S(1,f)}(r), \ \ \ \ \ \ \pi_{GMI}^f(q) = \min_{w \in \Z} \pi_{S(1,f)}(q + w).$$ 
		\end{fact}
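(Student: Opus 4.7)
}

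The plan is to verify the two identities by direct computation from the definitions, treating the continuous coordinate $\psi$ and the integer coordinate $\pi$ separately.

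First, for $\psi$: since $n=1$ I may assume without loss of generality that $f\in(0,1)$, in which case $S(1,f)=[\lfloor f\rfloor,\lceil f\rceil]=[0,1]$ and the centered split is the interval $\bar{S}(1,f)=[-f,\,1-f]$. Direct evaluation of the Minkowski functional of this interval gives $\gamma_{\bar{S}(1,f)}(r)=r/(1-f)$ when $r\ge 0$ and $\gamma_{\bar{S}(1,f)}(r)=-r/f$ when $r\le 0$, equivalently the tent $\max\{r/(1-f),\,-r/f\}$. Since $[f]=f$ in this range, this is exactly the definition of $\psi^f_{\GMI}(r)$, so $\psi^f_{\GMI}=\psi_{S(1,f)}$.

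Second, for $\pi$: I must evaluate $\min_{w\in\Z}\pi_{S(1,f)}(q+w)=\min_{w\in\Z}\gamma_{\bar{S}(1,f)}(q+w)$. The function $\gamma_{\bar{S}(1,f)}$ is nonnegative, vanishes only at $0$, and is strictly increasing in $|r|$ (linearly, with slope $1/(1-f)$ on the right and $1/f$ on the left). Consequently, the minimum over integer shifts is attained at whichever of the two integer shifts bringing $q+w$ closest to $0$ produces the smaller value: $w=-\lfloor q\rfloor$ gives $q+w=[q]\in[0,1)$ with value $[q]/(1-f)$, while $w=-\lfloor q\rfloor-1$ gives $q+w=[q]-1\in[-1,0)$ with value $(1-[q])/f$. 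Any other integer shift lies strictly further from $0$ and hence produces a strictly larger value by monotonicity of the two linear pieces. The smaller of the two candidates is precisely $\pi^f_{\GMI}(q)$.

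The proof contains no essential obstacle: both identities reduce to computing the Minkowski functional of the explicit interval $[-f,\,1-f]$ and to the fact that, for a V-shaped function with a unique minimum at $0$, the integer minimizer of a translated evaluation is one of the two integer neighbors of $0$. Conceptually, the content of the fact is that for $\alpha=1$ in dimension one the lattice-free split is just the unit interval translated by $f$, whose gauge is the classical GMI tent, and that lifting this lattice-free cut from $\contCorner$ to $\corner$ amounts exactly to replacing each integer-variable coefficient $q$ by its best integer translate $q+w$, recovering the classical GMI coefficient.
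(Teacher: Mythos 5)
Your computation is correct, and it is the natural self-contained verification of this statement; the paper itself offers no proof of Fact~\autoref{fact:GMILifting}, simply citing \cite{ccz}, so there is no in-paper argument to compare against. For $n=1$ and $f\in(0,1)$ the split $S(1,f)$ is the unit interval $[0,1]$, its centered version is $[-f,\,1-f]$, and the gauge of that interval is the tent $\max\{r/(1-f),\,-r/f\}$, which gives the $\psi$-identity immediately. Your treatment of the $\pi$-identity is also sound: since the gauge is piecewise linear with positive slope in $|r|$ away from its unique zero at the origin, the infimum over integer translates is attained at one of the two translates landing in $[-1,1)$, namely $q+w=[q]$ or $q+w=[q]-1$, and every other translate is dominated by one of these two; this yields $\min_{w\in\Z}\gamma_{\bar{S}(1,f)}(q+w)=\min\bigl\{[q]/(1-f),\,(1-[q])/f\bigr\}$.

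One caveat, which is a defect of the paper rather than of your argument: the displayed definition \eqref{eq:defGMIPi} writes $\pi^f_{\GMI}(q)$ as a \emph{maximum} of $[q]/(1-[f])$ and $(1-[q])/[f]$, whereas your final step identifies the lifted coefficient with their \emph{minimum}. The minimum is the standard GMI coefficient and is what the fact requires; with the maximum the fact would be false (for $f=1/2$, $q=1/4$ one gets $\min_{w\in\Z}\gamma_{\bar{S}(1,f)}(q+w)=1/2$ while $\max\{1/2,3/2\}=3/2$), and the paper's later explicit formula \eqref{eq:explicitKCut} for $\bar{f}=1/2$ is likewise consistent only with the minimum. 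So \eqref{eq:defGMIPi} contains a typo, your proof establishes the statement as intended, and you should state explicitly that you are using the min form of the GMI coefficient.
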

		
We define a lifting procedure for lattice-free cuts. Given an integral function $\ell : Q \rightarrow \Z$ we consider the program
		\begin{equation} \tag{$\contCorner(f^0, R^0, Q^\ell)$} \label{eq:lifting}
			\begin{array}{rll}
			{x \choose x_{n+1}} &= {f \choose 0} + \sum_{r \in R} {r \choose 0} \cdot s{r \choose 0} + &\sum_{q \in Q} {q \choose \ell(q)} \cdot y{q \choose \ell(q)} \\
			x &\in \Z^{n}& \\
			x_{n+1} &\in \Z& \\
			s &\in \pFiniteSupp{R^0}, y \in \pFiniteSupp{Q^\ell}& 
			\end{array}
		\end{equation}	
		where we define $f^0 = (f,0)$, $R^0 = \{(r, 0) : r \in R\}$ and $Q^\ell = \{(q, \ell(q)) : q \in Q\}$.
		
		Intuitively, $\contCorner(f^0, R^0, Q^\ell)$ is a relaxation of $\corner(f, R, Q)$ and hence valid cuts for the former should be valid for the latter. Indeed, it is easy to check that given a lattice-free set $S \subseteq \R^{n+1}$ containing $(f,0)$ in its interior, the cut $(\psi^+_S, \pi^+_S)$ given by 
			\begin{align*}
				\psi^+_S(r) = \minkEmpty_{S - f}{r \choose 0} \ \ \ \forall r \in R, \ \ \ \pi^+_{S, \ell}(q) = \minkEmpty_{S - f}{q \choose \ell(q)} \ \ \ \forall q \in Q
			\end{align*}
			is valid for $\corner(f, R, Q)$. Moreover, keeping the set $S$ fixed, we can also look at the strongest possible cuts we can obtain by choosing different $\ell$'s; more precisely, the function $(\tilde{\psi}_S, \tilde{\pi}_S)$ given by
$$
			\begin{array}{c}
				\tilde{\psi}_S(r) = \psi^+_S(r) = \minkEmpty_{S - f}{r \choose 0}\ \ \ \forall r \in R,\\
				\\
				 \tilde{\pi}_{S}(q) = \inf\{ \pi^+_{S, \ell}(q) \mid \ell : Q \rightarrow \Z\} = \inf\left\{ \minkEmpty_{S - f}{q \choose \ell(q)} : \ell(q) \in \Z \right\} \ \ \ \forall q \in Q
			\end{array}
$$
			is valid for $\corner(f, R, Q)$.
			
The following technical lemma is well-known in the lifting literature. 

\begin{lemma}\label{lemma:min}
Let $S = S(\alpha, f) \subseteq \R^n$ be a lattice-free split set for some $\alpha \in \Z_f$. Define $\pi^*(r) = \inf\{\gamma_{S - f}(r+w) : w \in X\}$, where $X$ is a subset of $\Z^n$. Then, there exists $w^*\in X$ such that $\pi^*(r) = \gamma_{S-f}(r + w^*)$.
\end{lemma}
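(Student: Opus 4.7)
The plan is to exploit the fact that, for a lattice-free split, the Minkowski functional $\gamma_{S-f}$ factors through the single linear functional $\alpha$. Since $\alpha \in \Z_f$ means $\alpha f \notin \Z$, we have $S-f = \{y \in \R^n : -[\alpha f] \le \alpha y \le 1-[\alpha f]\}$, and a direct computation from \eqref{eq:mink} (splitting into the cases $\alpha v > 0$, $\alpha v < 0$, and $\alpha v = 0$, where the last one gives value $0$ because $v$ lies in the recession cone of $S-f$) yields
\begin{equation*}
\gamma_{S-f}(v) \;=\; \max\!\left\{\frac{\alpha v}{1-[\alpha f]},\; -\frac{\alpha v}{[\alpha f]}\right\} \quad \text{for every } v \in \R^n.
\end{equation*}
Write the right-hand side as $\phi(\alpha v)$, where $\phi : \R \to \R_+$ is a continuous, nonnegative, piecewise-linear function vanishing only at $0$. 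Because $\alpha \in \Z_f$ forces both $[\alpha f]$ and $1-[\alpha f]$ to be strictly positive, $\phi$ is coercive, i.e.\ $\phi(t) \to \infty$ as $|t| \to \infty$. In particular, the problem reduces to
\begin{equation*}
\pi^*(r) \;=\; \inf_{w \in X} \phi(\alpha r + \alpha w),
\end{equation*}
which depends on $w$ only through the integer $\alpha w \in \Z$.

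Next I would carry out the one-dimensional discrete argument. Let $T = \{\alpha w : w \in X\} \subseteq \Z$, which we may assume nonempty (otherwise the lemma is vacuous). Fix some $w_0 \in X$ and set $L = \phi(\alpha r + \alpha w_0)$. By coercivity, $\{t \in \R : \phi(\alpha r + t) \le L\}$ is a bounded interval, so it contains only finitely many integers, and therefore only finitely many elements of $T$. Minimizing $\phi(\alpha r + \cdot)$ over this finite subset of $T$ produces some $t^* \in T$ with $\phi(\alpha r + t^*) = \inf_{t \in T} \phi(\alpha r + t) = \pi^*(r)$. Choosing any $w^* \in X$ with $\alpha w^* = t^*$ then gives $\gamma_{S-f}(r+w^*) = \phi(\alpha r + t^*) = \pi^*(r)$, as required.

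The only real conceptual point is the first one: noticing that for a split set the Minkowski functional is constant along the hyperplane $\{\alpha v = 0\}$ and otherwise a one-variable function of $\alpha v$. This collapses a potentially high-dimensional lattice optimization over $X \subseteq \Z^n$ to a one-dimensional minimization of a coercive piecewise-linear function over a subset of $\Z$, after which attainment is automatic. I do not anticipate any hidden obstacle; the hypothesis $\alpha \in \Z_f$ (which rules out the degenerate case where $[\alpha f] = 0$ and $\phi$ fails to be coercive on one side) is exactly what is needed.
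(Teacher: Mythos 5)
Your proof is correct and follows essentially the same route as the paper: both reduce $\gamma_{S-f}$ to a coercive, nonnegative, two-piece piecewise-linear function of the scalar $\alpha v$, observe that the relevant parameter ranges over a discrete subset of $\R$ (you use $\alpha w \in \Z$ directly, the paper uses the projection coefficient $\lambda$ of $w = \lambda\alpha + v$ onto $\mathrm{span}(\alpha)$, which lies in a scaled lattice), and conclude attainment because a sublevel set is a bounded interval containing only finitely many such points. Your parametrization by $\alpha w$ is a slight streamlining of the paper's orthogonal decomposition, but the argument is the same.
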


\begin{proof}
It can be verified that $$\gamma_{S-f}(r) = \max\left\{\frac{\alpha r}{\lceil \alpha f \rceil}, \frac{-\alpha r}{\alpha f - \lfloor \alpha f \rfloor}\right\} \triangleq \max\left\{c_1 \alpha r, - c_2 \alpha r\right\},$$ see for instance \cite{ccz}.

	For any $w \in X \subseteq \Z^n$, there exists $\lambda \in \R$ and $v \in \R^n$ such that $w = \lambda \alpha + v$ and $\alpha \cdot v = 0$. Thus, $\gamma_{S-f}(r + w) = \max\{c_1 \alpha r + c_1 \lambda \|\alpha\|^2, - c_2 \alpha r - c_2 \lambda \|\alpha \|^2\}$. Let $\Lambda = \{\lambda \in \R : \exists v \in \R^n\textrm{ such that } \lambda \alpha + v \in X, \alpha v = 0 \}$ be the projection of the $X\subseteq \Z^n$ onto the subspace spanned by $\alpha$; since $\alpha$ is rational, we have that $\Lambda$ is a subset of a lattice. With this definition, we have 
	\begin{align}
		\pi^*(r) = \inf\{\gamma_{S - f}(r+w) : w \in X\} = \inf\left\{ \max\{c_1 \alpha r + c_1 \lambda \lVert\alpha\rVert^2, - c_2 \alpha r - c_2 \lambda \lVert \alpha \rVert^2\} : \lambda \in \Lambda\right\}. \label{kcuts:eq:minkSplit}
	\end{align}	
	
		Now $\omega(\lambda) \triangleq \max\{c_1 \alpha r + c_1 \lambda \lVert\alpha\rVert^2, - c_2 \alpha r - c_2 \lambda \lVert \alpha \rVert^2\}$ is a piecewise linear convex function in $\lambda$ with two pieces, and $\Lambda$ is a subset of some lattice. Moreover, $\omega(\lambda) \geq 0$ for all $\lambda \in \R$. Thus, the infimum in the right-hand side of \eqref{kcuts:eq:minkSplit} is actually achieved. This proves the lemma.\end{proof}


	\section{Characterization of Split Cuts}

	\subsection{Split Cuts as Lifted Lattice-free-Split Cuts}\label{s:split-lattice-free}
	
	The main result in this section is the next lemma, which shows that every split cut for $\corner(f, R, Q)$ can be obtained via geometric lifting from a cut derived from a lattice-free split set. We say that $(\psi, \pi)$ {\em dominates} $(\psi', \pi')$ with respect to a set $X$ if $X \cap (\psi, \pi) \subseteq X \cap (\psi', \pi')$.

	\begin{lemma} \label{lemma:equivSplitLifted}
		A valid cut $(\psi, \pi)$ for $\corner(f, R, Q)$ is a split cut iff there is $\alpha \in \Z_f$ such that $(\psi, \pi)$ is dominated by the cut $(\tilde{\psi}_S, \tilde{\pi}_S)$ with respect to $\lpCorner(f, R, Q)$, where $S = S((\alpha,1), (f,0))$.
	\end{lemma}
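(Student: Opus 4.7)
The plan is to view $S=S((\alpha,1),(f,0))\subseteq\R^{n+1}$ geometrically: its interior is precisely the complement of the lifted disjunction $\bar D((\alpha,1),(f,0))$, so membership of a lifted point in $S^c$ corresponds exactly to satisfying the disjunction. By Lemma~\ref{lemma:min} applied with $X=\{0\}\times\Z$, for each $q\in Q$ the infimum defining $\tilde\pi_S(q)$ is attained by some integer $\ell^*(q)$, i.e.\ $\tilde\pi_S(q)=\gamma_{S-(f,0)}(q,\ell^*(q))$. The workhorse is Theorem~\ref{thm:equivSplitLF} applied to the lifted continuous relaxation $\contCorner((f,0),R^0,Q^\beta)$: it turns a lattice-free split cut in $\R^{n+1}$ into a convex combination of points satisfying the lifted disjunction.

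For the $(\Leftarrow)$ direction I would pick the disjunction $D(\alpha,\ell^*,f)$ and show that every $(x,s,y)\in\lpCorner\cap D(\alpha,\ell^*,f)$ already satisfies $(\tilde\psi_S,\tilde\pi_S)$; domination then transfers this to $(\psi,\pi)$, proving it is a split cut. Concretely, I set $x_{n+1}=\sum_q\ell^*(q)y(q)$, so $(x-f,x_{n+1})=\sum_r s(r)(r,0)+\sum_q y(q)(q,\ell^*(q))$. The condition $(x,s,y)\in D(\alpha,\ell^*,f)$ is exactly $(x,x_{n+1})\notin\inte(S)$, equivalently $\gamma_{S-(f,0)}(x-f,x_{n+1})\ge 1$. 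Sublinearity of the Minkowski gauge then yields
$$\sum_r s(r)\tilde\psi_S(r)+\sum_q y(q)\tilde\pi_S(q)\;\ge\;\gamma_{S-(f,0)}(x-f,x_{n+1})\;\ge\;1,$$
using that $\tilde\psi_S(r)=\gamma_{S-(f,0)}(r,0)$ and $\tilde\pi_S(q)=\gamma_{S-(f,0)}(q,\ell^*(q))$.

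For the $(\Rightarrow)$ direction, assume $(\psi,\pi)$ is a split cut for $D(\alpha,\beta,f)$ and set $S=S((\alpha,1),(f,0))$. Take an arbitrary $(\bar x,\bar s,\bar y)\in\lpCorner\cap(\tilde\psi_S,\tilde\pi_S)$; the goal is to show it satisfies $(\psi,\pi)$. Because $\tilde\pi_S(q)\le\gamma_{S-(f,0)}(q,\beta(q))$ by definition of the infimum, the point also satisfies the weaker cut with coefficients $\psi'(r)=\gamma_{S-(f,0)}(r,0)$ and $\pi'_\beta(q)=\gamma_{S-(f,0)}(q,\beta(q))$. Lifting by $\beta$ via $\bar x_{n+1}=\sum_q\beta(q)\bar y(q)$ produces a point in $\lpCorner((f,0),R^0,Q^\beta)$ satisfying the genuine lattice-free split cut $(\psi_S,\pi_S)$ attached to $S$ in $\R^{n+1}$. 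Theorem~\ref{thm:equivSplitLF} applied in the lifted space expresses this lifted point as a convex combination of points in $\lpCorner((f,0),R^0,Q^\beta)\cap\bar D((\alpha,1),(f,0))$. Projecting each summand back to $\R^n$ (dropping the $(n{+}1)$-st coordinate and identifying $y(q,\beta(q))$ with $y(q)$) gives points in $\lpCorner(f,R,Q)\cap D(\alpha,\beta,f)$, each of which satisfies the linear inequality $(\psi,\pi)$; so their convex combination $(\bar x,\bar s,\bar y)$ does as well.

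The main obstacle is reconciling the two different roles of the integer lifting function: the ``canonical'' choice $\ell^*$ that minimizes the gauge in the definition of $\tilde\pi_S$ need not coincide with the $\beta$ appearing in the given disjunction. The resolution is asymmetric between the two directions: in $(\Leftarrow)$ we exploit that $\ell^*$ is optimal (to obtain equality of gauges and invoke sublinearity tightly), whereas in $(\Rightarrow)$ we deliberately \emph{weaken} to $\pi'_\beta\ge\tilde\pi_S$ before lifting, so that the disjunction recovered by Theorem~\ref{thm:equivSplitLF} in the lifted space matches $D(\alpha,\beta,f)$ upon projection. Aside from this, one must be careful that the lifted point really belongs to $\lpCorner((f,0),R^0,Q^\beta)$ and that convex combinations of its projection preserve all the constraints defining $\lpCorner(f,R,Q)$, which follows from linearity of the equality defining the relaxation.
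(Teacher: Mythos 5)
Your proposal is correct and follows essentially the same route as the paper: both directions work by lifting to the continuous relaxation $\contCorner(f^0,R^0,Q^\beta)$ (resp.\ $Q^{\ell^*}$), invoking Theorem~\autoref{thm:equivSplitLF} for the split set $S((\alpha,1),(f,0))$ there, and using Lemma~\autoref{lemma:min} to replace the infimum in $\tilde\pi_S$ by an attained integer $\ell^*$. The only difference is presentational: you verify the correspondence between $D(\alpha,\beta,f)$ and $\bar D((\alpha,1),(f,0))$ and the transfer of cut satisfaction pointwise, where the paper packages these steps as Observation~\autoref{obs:isomorphism} and Lemmas~\autoref{lemma:integral_comb} and~\autoref{lemma:cutIso}.
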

	
	The heart of the argument for proving Lemma~\autoref{lemma:equivSplitLifted} is showing the equivalence between split cuts for $\corner(f, R, Q)$ and $\contCorner(f^0, R^0, Q^\ell)$; the idea is that we can simulate any split disjunction for the former by setting the value of $\ell$ appropriately and using a disjunction on the latter program which utilizes the new constraint $x_{n+1} = \sum_{q \in Q} \ell(q) \cdot y{q \choose \ell(q)}$. To make this precise, define the function $\Gamma^\circ_\ell : \R^R \times \R^Q \rightarrow \R^{R^0} \times \R^{Q^\ell}$ which maps $(\psi, \pi)$ into $(\hat\psi, \hat\pi)$ by setting $$\hat\psi(r,0) = \psi(r) \ \ \ \forall r \in R, \ \ \ \ \ \hat\pi(q, \ell(q)) = \pi(q) \ \ \ \forall q \in Q,$$ and the functional $\Gamma_{\ell} : \lpCorner(f, R, Q) \rightarrow \lpCorner(f^0, R^0, Q^{\ell})$ which maps a solution $(x, s, y)$ into $(x', s', y')$ by setting 
	\begin{align*}
		x'_i &= x_i, \ \ \ i = 1, 2, \ldots, n\\
		x'_{n+1} &= \sum_{q \in Q} \ell(q) y(q)\\
		s'(r, 0) &= s(r) \ \ \ \textrm{for all $r \in R$} \\
		y'(q, \ell(q)) &= y(q) \ \ \ \textrm{for all $q \in Q$}.
	\end{align*}
	
	The following can be readily verified.
	
	\begin{observation} \label{obs:isomorphism}
		Given any tuple $(x, s, y) \in \R^n \times \R^R \times \R^Q$, we have:
		\begin{enumerate}
			\item $(x, s, y)$ is a solution to $\lpCorner(f, R, Q)$ iff $\Gamma_\ell(x,s,y)$ is a solution to $\lpCorner(f^0, R^0, Q^{\ell})$.
			
			\item $(x,s,y)$ satisfies the cut $(\psi, \pi)$ if and only if $\Gamma_\ell(x,s,y)$ satisfies the cut $\Gamma^{\circ}_\ell(\psi, \pi)$.
		\end{enumerate}
	\end{observation}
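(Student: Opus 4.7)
The plan is to verify both items by directly unpacking the definitions of $\Gamma_\ell$ and $\Gamma^\circ_\ell$; both claims reduce to a straightforward bookkeeping once one observes that the maps $R \to R^0$ given by $r \mapsto (r,0)$ and $Q \to Q^\ell$ given by $q \mapsto (q,\ell(q))$ are bijections, and that $\Gamma_\ell$ is invertible with $s'(r,0) = s(r)$, $y'(q,\ell(q)) = y(q)$, $x'_i = x_i$ for $i \le n$.

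For part (1), I would write out the defining linear equation of $\lpCorner(f^0, R^0, Q^\ell)$ coordinate by coordinate when applied to $(x', s', y') = \Gamma_\ell(x,s,y)$. The first $n$ coordinates of the vector equation
\[
\binom{x'}{x'_{n+1}} \;=\; \binom{f}{0} + \sum_{r\in R}\binom{r}{0}\,s'(r,0) + \sum_{q\in Q}\binom{q}{\ell(q)}\,y'(q,\ell(q))
\]
collapse to $x = f + \sum_r r\, s(r) + \sum_q q\, y(q)$ under the identifications above, which is precisely the defining equation of $\lpCorner(f,R,Q)$. The last coordinate collapses to $x'_{n+1} = \sum_q \ell(q)\, y(q)$, which is how $x'_{n+1}$ was defined in $\Gamma_\ell$, so it holds automatically. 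Non-negativity of $s', y'$ and finite support transfer across the bijections $R \leftrightarrow R^0$ and $Q \leftrightarrow Q^\ell$. The reverse direction is obtained by running the same computation backwards, using that $\Gamma_\ell$ is a bijection onto its image (the preimage of $(x',s',y')$ is uniquely determined, with $x'_{n+1}$ being a redundant coordinate determined by $y'$).

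For part (2), I would simply substitute the definitions of $\hat\psi = \hat\psi_\ell(\psi)$ and $\hat\pi = \hat\pi_\ell(\pi)$ into the cut expression for $\Gamma^\circ_\ell(\psi, \pi)$ evaluated at $\Gamma_\ell(x,s,y)$:
\[
\sum_{r\in R}\hat\psi(r,0)\, s'(r,0) + \sum_{q\in Q}\hat\pi(q,\ell(q))\, y'(q,\ell(q)) \;=\; \sum_{r\in R}\psi(r)\,s(r) + \sum_{q\in Q}\pi(q)\,y(q).
\]
Thus the left-hand side is $\ge 1$ if and only if the right-hand side is, which is exactly the equivalence stated.

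There is no real obstacle here; the observation is a definitional translation. The only care needed is to keep the two bijections $R \leftrightarrow R^0$ and $Q \leftrightarrow Q^\ell$ straight, and to note that the extra coordinate $x'_{n+1}$ introduces no new constraint on $(x,s,y)$ because it is defined (both in $\Gamma_\ell$ and forced by the $(n+1)$-th row of the lifted equation) as $\sum_q \ell(q) y(q)$.
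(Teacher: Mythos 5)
Your proof is correct, and it coincides with the paper's treatment: the paper states this observation without proof (``The following can be readily verified''), and your definitional unpacking via the bijections $R \leftrightarrow R^0$ and $Q \leftrightarrow Q^\ell$, together with the remark that the coordinate $x'_{n+1}$ is redundant (forced by the $(n+1)$-st row to equal $\sum_{q \in Q} \ell(q) y(q)$, exactly as $\Gamma_\ell$ defines it), is precisely the routine verification the authors had in mind.
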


		\begin{lemma}\label{lemma:integral_comb}
			The cut $(\psi, \pi)$ is a split cut for $\corner(f, R, Q)$ with respect to the disjunction $D(\alpha, \beta, f)$ iff $\Gamma^\circ_{\beta}(\psi, \pi)$ is a split cut for $\contCorner(f^0, R^0, Q^\beta)$ with respect to the disjunction $\bar{D}((\alpha, 1), (f,0))$. 
		\end{lemma}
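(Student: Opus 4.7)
The plan is to leverage Observation \autoref{obs:isomorphism}, which already tells us that the map $\Gamma_\beta$ is a bijection between $\lpCorner(f,R,Q)$ and $\lpCorner(f^0,R^0,Q^\beta)$ and that it intertwines satisfaction of a cut $(\psi,\pi)$ with satisfaction of its image $\Gamma^\circ_\beta(\psi,\pi)$. The only additional ingredient needed is that $\Gamma_\beta$ also intertwines the two disjunctions. Once both facts are in hand, the equivalence of split cuts is essentially a tautology, because the validity inequalities are taken over the bijectively-corresponding sets.

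First I would verify the disjunction correspondence directly from definitions. Given $(x,s,y) \in \lpCorner(f,R,Q)$, let $(x',s',y') = \Gamma_\beta(x,s,y)$. By construction $x'_i = x_i$ for $i \leq n$ and $x'_{n+1} = \sum_{q \in Q} \beta(q)\, y(q)$, so
\[
(\alpha,1)\cdot(x',x'_{n+1}) \;=\; \alpha x + \sum_{q \in Q} \beta(q)\, y(q),
\]
while on the right-hand side $\lfloor (\alpha,1)\cdot(f,0) \rfloor = \lfloor \alpha f\rfloor$ and $\lceil (\alpha,1)\cdot(f,0)\rceil = \lceil \alpha f\rceil$. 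Thus $(x,s,y) \in D(\alpha,\beta,f)$ if and only if $\Gamma_\beta(x,s,y) \in \bar D((\alpha,1),(f,0))$. Combining with part (1) of Observation \autoref{obs:isomorphism}, $\Gamma_\beta$ restricts to a bijection between $\lpCorner(f,R,Q) \cap D(\alpha,\beta,f)$ and $\lpCorner(f^0,R^0,Q^\beta) \cap \bar D((\alpha,1),(f,0))$.

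Next I would chain this with part (2) of the Observation to prove both directions of the iff. For the forward direction, suppose $(\psi,\pi)$ is a split cut for $\corner(f,R,Q)$ with respect to $D(\alpha,\beta,f)$, and let $(x',s',y') \in \lpCorner(f^0,R^0,Q^\beta) \cap \bar D((\alpha,1),(f,0))$. By the bijection, $(x',s',y') = \Gamma_\beta(x,s,y)$ for some $(x,s,y) \in \lpCorner(f,R,Q) \cap D(\alpha,\beta,f)$; this point satisfies $(\psi,\pi)$, so by Observation \autoref{obs:isomorphism}(2) its image satisfies $\Gamma^\circ_\beta(\psi,\pi)$, which is what we need. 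The reverse direction is completely symmetric, using that every point in $\lpCorner(f,R,Q) \cap D(\alpha,\beta,f)$ is mapped by $\Gamma_\beta$ into $\lpCorner(f^0,R^0,Q^\beta) \cap \bar D((\alpha,1),(f,0))$.

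There is no real obstacle here; the only mild subtlety is to convince oneself that $\Gamma_\beta$ is genuinely a \emph{bijection} (so that the forward direction is not missing any solutions), which is straightforward because any $(x',s',y') \in \lpCorner(f^0,R^0,Q^\beta)$ has its support restricted to rays of the form $(r,0)$ and $(q,\beta(q))$, allowing us to read off a unique preimage by setting $s(r) = s'(r,0)$, $y(q) = y'(q,\beta(q))$ and $x_i = x'_i$ for $i \leq n$. The identity $x'_{n+1} = \sum_q \beta(q) y(q)$ then automatically holds, giving the matching of the two disjunctions used above.
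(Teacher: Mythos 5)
Your proof is correct and takes essentially the same route as the paper's: verify that $\Gamma_\beta$ maps $D(\alpha,\beta,f)$ onto $\bar D((\alpha,1),(f,0))$, combine with part (1) of Observation \autoref{obs:isomorphism} to identify the two feasible regions, and then use part (2) to transfer validity of the cut. Your explicit check that $\Gamma_\beta$ is surjective onto $\lpCorner(f^0,R^0,Q^\beta)$ is a detail the paper leaves implicit, but the argument is the same.
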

		
		\begin{proof}
		Observe that $(x,s,y) \in D(\alpha, \beta, f)$ if and only if $\Gamma_{\beta}(x,s,y) \in \bar{D}((\alpha, 1), (f,0))$. From Observation~\autoref{obs:isomorphism}, $(x,s,y) \in \lpCorner(f, R, Q)$ if and only if $\Gamma_\beta(x,s,y) \in \lpCorner(f^0, R^0, Q^\beta)$. Thus, $(x,s,y) \in \lpCorner(f, R, Q) \cap D(\alpha, \beta, f)$ if and only if $\Gamma_\beta(x,s,y) \in \lpCorner(f^0, R^0, Q^\beta) \cap \bar{D}((\alpha, 1), (f,0))$. By Observation~\autoref{obs:isomorphism}, $(\psi, \pi)$ is valid for $\lpCorner(f, R, Q) \cap D(\alpha, \beta, f)$ if and only if $\Gamma^\circ_{\beta}(\psi, \pi)$ is valid for $\lpCorner(f^0, R^0, Q^\beta) \cap \bar{D}((\alpha, 1), (f,0))$. Thus, we are done.\end{proof}		
		
		We also need the following easy lemma, which states that $\Gamma^{\circ}_\ell$ preserves cut dominance.
		
		\begin{lemma} \label{lemma:cutIso}
			Let $(\psi, \pi)$ and $(\psi', \pi')$ be valid cuts for $\corner(f, R, Q)$.
		The cut $(\psi, \pi)$ dominates $(\psi', \pi')$ with respect to $\lpCorner(f, R, Q)$ iff the cut $\Gamma^\circ_\ell(\psi, \pi)$ dominates $\Gamma^{\circ}_\ell(\psi', \pi')$ with respect to $\lpCorner(f^0, R^0, Q^\ell)$. 
	\end{lemma}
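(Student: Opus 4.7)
The plan is to read the lemma off directly from Observation~\autoref{obs:isomorphism}, once we view $\Gamma_\ell$ as a bijection between the two LP relaxations that also preserves cut satisfaction under $\Gamma^\circ_\ell$.

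First I would record that $\Gamma_\ell$ is a bijection between $\lpCorner(f, R, Q)$ and $\lpCorner(f^0, R^0, Q^\ell)$. Injectivity is transparent from the formula: given $(x', s', y') = \Gamma_\ell(x, s, y)$, one recovers $x$ by dropping the last coordinate, $s(r) = s'(r, 0)$ for all $r \in R$, and $y(q) = y'(q, \ell(q))$ for all $q \in Q$. Surjectivity follows because every element of $\lpCorner(f^0, R^0, Q^\ell)$ has $s'$ supported on $R^0$ and $y'$ supported on $Q^\ell$ (built into $\pFiniteSupp{R^0}$ and $\pFiniteSupp{Q^\ell}$), and the equation for $x'_{n+1}$ is automatically $\sum_{q \in Q} \ell(q)\, y(q)$. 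The obvious preimage $(x, s, y)$ then lies in $\lpCorner(f, R, Q)$ by part~1 of Observation~\autoref{obs:isomorphism}.

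Next I would chase the definition of dominance. For the forward direction, assume $(\psi, \pi)$ dominates $(\psi', \pi')$ with respect to $\lpCorner(f, R, Q)$, and pick any $(x', s', y') \in \lpCorner(f^0, R^0, Q^\ell) \cap \Gamma^\circ_\ell(\psi, \pi)$. Let $(x, s, y) = \Gamma_\ell^{-1}(x', s', y')$, which lies in $\lpCorner(f, R, Q)$ by the bijection above. Part~2 of Observation~\autoref{obs:isomorphism} gives $(x, s, y) \in (\psi, \pi)$, the dominance hypothesis then yields $(x, s, y) \in (\psi', \pi')$, and applying part~2 again in the other direction gives $(x', s', y') \in \Gamma^\circ_\ell(\psi', \pi')$, as desired. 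The converse direction runs the identical argument, starting from $(x, s, y) \in \lpCorner(f, R, Q) \cap (\psi, \pi)$, pushing it through $\Gamma_\ell$, applying the assumed dominance on the lifted side, and pulling back via $\Gamma_\ell^{-1}$ together with part~2 of the Observation.

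There is essentially no obstacle: all of the content has been absorbed into Observation~\autoref{obs:isomorphism} and the trivial verification that $\Gamma_\ell$ is a bijection. The only point requiring mild attention is making sure the preimage under $\Gamma_\ell$ lands inside $\lpCorner(f, R, Q)$, which is exactly the ``if'' direction of part~1 of the Observation. The rest is a symmetric diagram chase.
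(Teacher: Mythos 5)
Your proposal is correct and follows essentially the same route as the paper: both reduce the claim to Observation~\autoref{obs:isomorphism} and the fact that $\Gamma_\ell$ identifies the two LP relaxations. The only difference is that you make the surjectivity of $\Gamma_\ell$ onto $\lpCorner(f^0, R^0, Q^\ell)$ explicit, a point the paper's chain of equivalences uses implicitly.
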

	
	\begin{proof}
			The cut $(\psi, \pi)$ dominates $(\psi', \pi')$ with respect to $\lpCorner(f, R, Q)$ iff every point $(x, s, y) \in \lpCorner(f, R, Q)$ satisfying $(\psi, \pi)$ also satisfies $(\psi', \pi')$; from Observation~\autoref{obs:isomorphism} this happens iff every point $\Gamma_{\ell}(x, s, y)$ (for $(x, s, y) \in \lpCorner(f, R, Q)$) satisfying $\Gamma_{\ell}^\circ(\psi, \pi)$ also satisfies $\Gamma_{\ell}^\circ(\psi', \pi')$; but this happens iff the cut $\Gamma_{\ell}^\circ(\psi, \pi)$ dominates $\Gamma_{\ell}^\circ(\psi', \pi')$ with respect to $\lpCorner(f^0, R^0, Q^\ell)$.\end{proof}

		\begin{proofof}{Lemma \autoref{lemma:equivSplitLifted}}
			$(\Rightarrow)$ Suppose that $(\psi, \pi)$ is a split cut for $\corner(f, R, Q)$ with respect to the disjunction $D(\alpha, \beta, f)$. By Lemma~\autoref{lemma:integral_comb} we get that $(\psi', \pi') = \Gamma^{\circ}_\beta(\psi, \pi)$ is a split cut for $\contCorner(f^0, R^0, Q^\beta)$ with respect to the disjunction $\bar{D}((\alpha, 1), (f,0))$. From Theorem \autoref{thm:equivSplitLF}, we get that $(\psi', \pi')$ is dominated by $(\psi_S, \pi_S)$ with respect to $\lpCorner(f^0, R^0, Q^\ell)$. From Lemma \autoref{lemma:cutIso} we then have that $(\psi, \pi)$ is dominated by $(\Gamma_\beta^\circ)^{-1}(\psi_S, \pi_S)$ with respect to $\lpCorner(f, R, Q)$. Finally, notice that  $(\Gamma_\beta^\circ)^{-1}(\psi_S, \pi_S) = (\psi^+_S, \pi^+_{S, \beta})$; since $(\tilde{\psi}_{S}, \tilde{\pi}_{S})$ is at least as strong as $(\psi^+_S, \pi^+_{S, \beta})$, we have the first part of the lemma.
			
			$(\Leftarrow)$ By Lemma~\autoref{lemma:min}, $(\tilde\psi_S, \tilde\pi_S) = (\psi^+_{S}, \pi^+_{S, \ell})$ for some $\ell : Q \to \R$. Thus, it suffices to show that for all $\ell: Q \rightarrow \Z$ the cut $(\psi^+_{S}, \pi^+_{S, \ell})$ is a split cut for $\corner(f, R, Q)$. First notice that Theorem \autoref{thm:equivSplitLF} implies that $(\psi_S, \pi_S)$ is a split cut for $\contCorner(f^0, R^0, Q^\ell)$. Again since $(\psi^+_{S}, \pi^+_{S, \ell}) = (\Gamma^\circ_\ell)^{-1}(\psi_S, \pi_S)$, Lemma \autoref{lemma:integral_comb} gives that $(\psi^+_{S, \ell}, \pi^+_{S})$ is a split cut for $\corner(f, R, Q)$, thus concluding the proof of the lemma. 
		\end{proofof}


	
	\subsection{Linear Transformation of Lattice-free Cuts} \label{sec:tranfLatticeFree}

	An important tool to connect the GMI and $\alpha$-cuts with lattice-free split cuts for $\contCorner(f^0, R^0, Q^\ell)$ is to understand how the latter change when we apply a linear transformation to the split sets used to generate them. 
	
	The first observation follows directly from the definition of $\mink{S}{.}$ in \eqref{eq:mink}.
	
	\begin{lemma} \label{lemma:transfMink}
		Consider a convex set $S \subseteq \R^n$ with the origin in its interior. Then for every linear transformation $A : \R^m \rightarrow \R^n$ we have $\mink{S}{Ar} = \mink{A^{-1}(S)}{r}$ for all $r \in \R^m$.
	\end{lemma}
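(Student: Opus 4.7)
The proof is essentially a one-line computation once one unwinds the definition in \eqref{eq:mink}, so my plan is to present it cleanly and flag the one point that requires a moment's care, namely the interpretation of $A^{-1}(S)$ when $A$ is not invertible.

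The plan is to fix $r \in \R^m$ and chase through the definition:
\[
\mink{S}{Ar} = \inf\left\{\tfrac{1}{\lambda} > 0 : \lambda (Ar) \in S\right\}.
\]
By linearity of $A$, the condition $\lambda(Ar) \in S$ is the same as $A(\lambda r) \in S$, which by the definition of preimage is equivalent to $\lambda r \in A^{-1}(S)$. Substituting, the infimum becomes $\inf\{1/\lambda > 0 : \lambda r \in A^{-1}(S)\} = \mink{A^{-1}(S)}{r}$, which is exactly what we want.

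The only thing I would take a sentence to justify is that $\mink{A^{-1}(S)}{\cdot}$ is well-defined, i.e.\ that the set $A^{-1}(S) \subseteq \R^m$ has the origin in its interior so that the Minkowski functional \eqref{eq:mink} makes sense. Here I would interpret $A^{-1}(S)$ as the preimage $\{x \in \R^m : Ax \in S\}$ (not as the image under an inverse map, which need not exist when $A$ is not a bijection). Since $A$ is continuous and $A \cdot 0 = 0 \in \intr(S)$ by hypothesis, the preimage of the open set $\intr(S)$ under $A$ is an open subset of $\R^m$ containing the origin, so $0 \in \intr(A^{-1}(S))$. The set $A^{-1}(S)$ is also convex as the preimage of the convex set $S$ under a linear map.

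There is no real obstacle here; the statement is a direct consequence of the homogeneity of the Minkowski functional together with the commutation $\lambda(Ar)=A(\lambda r)$. The only thing worth being explicit about is the interpretation of $A^{-1}(S)$ as a preimage, so that the lemma applies to arbitrary (possibly non-square, non-invertible) linear maps $A:\R^m\to\R^n$, which is the generality needed in the applications later in Section~\ref{sec:tranfLatticeFree}.
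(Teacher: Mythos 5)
Your proof is correct and matches the paper's intent exactly: the paper gives no separate argument, simply noting that the lemma ``follows directly from the definition of $\mink{S}{\cdot}$,'' and your unwinding of the infimum via $\lambda(Ar)=A(\lambda r)$ is precisely that computation. Your added remark that $A^{-1}(S)$ is to be read as a preimage and contains the origin in its interior is a sensible (and correct) piece of bookkeeping the paper leaves implicit.
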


	Now we see how the description of a centered split $\bar{S}(u, f)$ changes when we apply a linear transformation to this set. \afterThesis{; as expected, the first parameter transforms ``contravariantly'' and the second transforms ``covariantly'' \mmnote{or vice-versa?}.}To make this formal, given a linear map $A : \R^m \rightarrow \R^n$, we use $A^t : \R^n \rightarrow \R^m$ to denote the \emph{adjoint} of $A$ (i.e., the unique linear map satisfying $A(u) \cdot v = u \cdot A^t(v)$).
	
	\begin{lemma} \label{lemma:transfSplit}
		Consider a centered split set $\bar{S}(u, f)$ with $u \in \Z^n$ and $f \in \R^n$, and a linear transformation $A : \R^m \rightarrow \R^n$. Then for every vector $f' \in A^{-1}(f)$, we have $$A^{-1}(\bar{S}(u, f)) = \bar{S}(A^t(u), f').$$ 
	\end{lemma}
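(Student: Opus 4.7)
The proof plan is essentially a direct unpacking of definitions together with the adjoint identity. The key observation is that the centered split $\bar{S}(u,f)$ has a simple hyperplane-slab description: subtracting $f$ from $S(u,f) = \{x : \lfloor uf \rfloor \le u \cdot x \le \lceil uf \rceil\}$ shifts the bounds, giving
\[
\bar{S}(u, f) = \{z \in \R^n : \lfloor uf \rfloor - uf \le u \cdot z \le \lceil uf \rceil - uf\}.
\]
So $\bar{S}(u,f)$ is completely determined by the linear form $u \cdot z$ and the two scalars $a = \lfloor uf\rfloor - uf$ and $b = \lceil uf \rceil - uf$.

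Next, I would compute $A^{-1}(\bar{S}(u,f))$ by substitution: $r \in A^{-1}(\bar{S}(u,f))$ means $A(r) \in \bar{S}(u,f)$, i.e., $a \le u \cdot A(r) \le b$. Using the defining property of the adjoint $u \cdot A(r) = A^t(u) \cdot r$, this becomes $a \le A^t(u) \cdot r \le b$. So $A^{-1}(\bar{S}(u,f))$ is the slab cut out by the linear form $A^t(u) \cdot r$ between the same constants $a$ and $b$.

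It remains to recognize this slab as $\bar{S}(A^t(u), f')$. Applying the same description to $\bar{S}(A^t(u), f')$, I get
\[
\bar{S}(A^t(u), f') = \{r : \lfloor A^t(u) \cdot f' \rfloor - A^t(u) \cdot f' \le A^t(u) \cdot r \le \lceil A^t(u) \cdot f' \rceil - A^t(u) \cdot f'\}.
\]
The crucial equality $A^t(u) \cdot f' = u \cdot A(f') = u \cdot f$ (using $f' \in A^{-1}(f)$) makes the two fractional bounds coincide with $a$ and $b$, and the lemma follows.

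There is no real obstacle here; the only thing to be mildly careful about is the degenerate case $uf \in \Z$, where $a = b = 0$ and both sets reduce to the hyperplane $\{r : A^t(u) \cdot r = 0\}$, which the same calculation handles uniformly. The whole proof is a few lines and requires no additional machinery beyond the definition of the adjoint and the hypothesis $A(f') = f$.
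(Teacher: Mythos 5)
Your proof is correct and follows essentially the same route as the paper's: unpack the slab description of $\bar{S}(u,f)$, pull back along $A$, apply the adjoint identity $u \cdot A(r) = A^t(u)\cdot r$, and use $A(f')=f$ to match the floor/ceiling bounds. The only cosmetic difference is that you shift the constants by $uf$ while the paper keeps the form $u\cdot(A(x)+f)$; the substance is identical.
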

	
\begin{proof}	Assume that $A^{-1}(f)$ is non-empty, otherwise there is nothing to prove, and consider $f' \in A^{-1}(f)$.	Recall that $\bar{S}(u,f) = S(u,f) - f = \{ x: \lfloor u f \rfloor \le u(x + f) \le \lceil uf \rceil\}$. Then we have
		\begin{align*}
			A^{-1}(\bar{S}(u,f)) &= \{ x \in \R^m : \lfloor u f \rfloor \le u \cdot (A(x) + f) \le \lceil uf \rceil\} \\
											&= \{ x \in \R^m : \lfloor u \cdot A(f') \rfloor \le u \cdot A(x + f') \le \lceil u \cdot A(f') \rceil\}\\
											&= \{ x \in \R^m : \lfloor A^t(u) \cdot f' \rfloor \le A^t(u) \cdot (x + f') \le \lceil A^t(u) \cdot f' \rceil\}\\
											&= \bar{S}(A^t(u), f').
		\end{align*} \end{proof}

	The next corollary states that we can use the geometric lifting to simulate the ``trivial lifting'' of a cut given by an interval, which is very handy given that we can construct the GMI function this way (recall Fact \autoref{fact:GMILifting}). 
	
	\begin{cor} \label{cor:trivialGeoLifting}
		For every centered split set $\bar{S}(u, f) \subseteq \R^1$, we have $$\minkEmpty_{\bar{S}(u, f)}(r + w) = \minkEmpty_{\bar{S}((u, u), (f,0))}{r \choose w} \ \ \ \ \forall r, w \in \R.$$
	\end{cor}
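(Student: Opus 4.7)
The plan is to apply the two preceding lemmas (\autoref{lemma:transfSplit} and \autoref{lemma:transfMink}) to a cleverly chosen linear map so that the $1$-dimensional split $\bar{S}(u,f)$ is pulled back to the $2$-dimensional split $\bar{S}((u,u),(f,0))$, at which point the equality of Minkowski functionals becomes automatic.

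First I would define the linear map $A : \R^2 \to \R^1$ by $A(r,w) = r + w$. A short computation using the defining relation $A(r,w) \cdot v = (r,w) \cdot A^t(v)$ shows that the adjoint is $A^t : \R^1 \to \R^2$ with $A^t(u) = (u,u)$. Next I would pick the preimage $f' = (f,0) \in A^{-1}(f)$, since $A(f,0) = f$. Then Lemma \autoref{lemma:transfSplit} applied to the centered split $\bar{S}(u,f) \subseteq \R$ with this $A$ and $f'$ yields
\begin{equation*}
A^{-1}(\bar{S}(u,f)) \;=\; \bar{S}(A^t(u), f') \;=\; \bar{S}((u,u),(f,0)).
\end{equation*}

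Finally I would invoke Lemma \autoref{lemma:transfMink} with the same $A$, applied to the convex set $\bar{S}(u,f) \subseteq \R$ (which contains the origin in its interior since $u \in \Z_f$). For any $(r,w) \in \R^2$ it gives
\begin{equation*}
\minkEmpty_{\bar{S}(u,f)}(A(r,w)) \;=\; \minkEmpty_{A^{-1}(\bar{S}(u,f))}(r,w),
\end{equation*}
and substituting $A(r,w) = r+w$ together with the identification of $A^{-1}(\bar{S}(u,f))$ above gives exactly the claimed identity.

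There is really no obstacle here: the whole content of the corollary is packaged into the two lemmas, and the only creative step is choosing the summation map $A(r,w) = r+w$ and the preimage $f' = (f,0)$. The one small thing worth double-checking is that $\bar{S}(u,f)$ does contain the origin in its interior so that Lemma \autoref{lemma:transfMink} applies; this is immediate from $u \in \Z_f$ (i.e.\ $uf \notin \Z$), which places $0$ strictly between $\lfloor uf \rfloor - uf$ and $\lceil uf \rceil - uf$.
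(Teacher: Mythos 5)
Your proposal is correct and is exactly the paper's argument: the paper's proof consists precisely of taking $A(x_1,x_2) = x_1+x_2$ with adjoint $u \mapsto (u,u)$, choosing $f'=(f,0)$, and applying Lemmas~\autoref{lemma:transfMink} and~\autoref{lemma:transfSplit}. Your added check that the origin lies in the interior of $\bar{S}(u,f)$ is a reasonable bit of extra care but does not change the route.
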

	
	\begin{proof}
		Let $A$ be the linear map $(x_1, x_2) \mapsto x_1 + x_2$ whose adjoint is $x_1 \mapsto (x_1, x_1)$, and let $f' = (f,0)$; apply Lemmas \autoref{lemma:transfMink} and \autoref{lemma:transfSplit}.
	\end{proof}

	\afterThesis{\mmnote{Add figure for the above lemma?}}

	In combination with the above result, the next corollary is useful to connect the definition of $\alpha$-cuts with our geometric lifting.

	\begin{cor} \label{cor:alphaCutInvariance}
		Consider $f \in \R^n$ and an integral vector $\alpha \in \Z_f$. Then
		\begin{align*}
			\minkEmpty_{\bar{S}((1,1), (\alpha f,0))}{\alpha q \choose w} = \minkEmpty_{\bar{S}((\alpha, 1), (f,0))}{q \choose w} \ \ \ \ \ \forall q \in \R^n, w \in \R.
		\end{align*}
	\end{cor}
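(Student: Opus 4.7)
The plan is to proceed in exact analogy with Corollary on the trivial geometric lifting: identify the right linear map, compute its adjoint, and then invoke Lemma on transfSplit and Lemma on transfMink in sequence.

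First I would define the linear transformation $A : \R^{n+1} \to \R^2$ by $A(q, w) = (\alpha q, w)$ for $q \in \R^n$ and $w \in \R$. A short computation of the pairing
\[
A(q,w) \cdot (a,b) \;=\; (\alpha q) a + w b \;=\; (q,w) \cdot (\alpha a, b)
\]
identifies the adjoint as $A^t(a,b) = (\alpha a, b)$. In particular $A^t((1,1)) = (\alpha, 1)$, which is exactly the first parameter appearing on the right-hand side of the claimed identity.

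Next I would pick the preimage of the center. Since $A(f, 0) = (\alpha f, 0)$, the vector $f' = (f,0) \in \R^{n+1}$ lies in $A^{-1}((\alpha f, 0))$. Applying Lemma on transfSplit with $u = (1,1)$ and center $(\alpha f, 0)$ therefore gives
\[
A^{-1}\bigl(\bar{S}((1,1), (\alpha f, 0))\bigr) \;=\; \bar{S}\bigl(A^t((1,1)), (f,0)\bigr) \;=\; \bar{S}((\alpha, 1), (f, 0)).
\]

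Finally, plugging this into Lemma on transfMink and evaluating the Minkowski functional at $(q,w)$ yields
\[
\minkEmpty_{\bar{S}((1,1), (\alpha f, 0))}\bigl(A(q,w)\bigr) \;=\; \minkEmpty_{A^{-1}(\bar{S}((1,1), (\alpha f, 0)))}(q,w) \;=\; \minkEmpty_{\bar{S}((\alpha, 1), (f, 0))}(q,w),
\]
and since $A(q,w) = (\alpha q, w)$, the left-hand side is $\minkEmpty_{\bar{S}((1,1), (\alpha f, 0))}(\alpha q, w)$, which is exactly the desired equality. There is no real obstacle here; the only thing to get right is the direction of the transformation (the ``covariant/contravariant'' bookkeeping) so that the adjoint delivers $(\alpha,1)$ from $(1,1)$ rather than the other way around, and this is pinned down by the requirement that $A$ be the map whose image contains the ``simpler'' center $(\alpha f, 0)$.
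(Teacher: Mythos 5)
Your proof is correct and follows exactly the same route as the paper: the authors also take $A(x_1,\ldots,x_{n+1}) = (\sum_i \alpha_i x_i,\, x_{n+1})$ with adjoint $(x_1,x_2)\mapsto(\alpha_1 x_1,\ldots,\alpha_n x_1, x_2)$, set $f'=(f,0)$, and invoke Lemmas \autoref{lemma:transfMink} and \autoref{lemma:transfSplit}. Your write-up simply spells out the pairing computation and the composition of the two lemmas that the paper leaves implicit.
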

	
	\begin{proof}
		Let $A$ be the linear map $(x_1, x_2, \ldots, x_{n+1}) \mapsto (\sum_{i = 1}^n \alpha_i x_i, x_{n+1})$, whose adjoint is $(x_1, x_2) \mapsto (\alpha_1 x_1, \alpha_2 x_1, \ldots, \alpha_n x_1, x_2)$, and let $f' = (f,0)$; apply Lemmas \autoref{lemma:transfMink} and \autoref{lemma:transfSplit}.
	\end{proof}


	\subsection{$\alpha$-cuts as Lifted Lattice-free Split Cuts}\label{alpha-lattice-free}

		Now we use the tools from the previous section to show that $\alpha$-cuts for $\corner(f, R, Q)$ are liftings of lattice-free split cuts for $\contCorner(f, R, Q)$. Notice the similarity of the statement below and Lemma \autoref{lemma:equivSplitLifted}, which shows that every split cut for $\corner(f, R, Q)$ is a lifting of a lattice-free split cut for $\contCorner(f, R, Q)$; in fact, it is through these liftings of lattice-free split cuts that we can show the equivalence of $\alpha$-cuts and split cuts for $\corner(f, R, Q)$ in the sequel.

		\begin{lemma} \label{lemma:kCutLifting}
			For every $\alpha \in \Z_f$, the $\alpha$-cut can be obtained using the geometric lifting procedure. More precisely, defining $S = S((\alpha, 1), (f,0))$ we have $$(\psi^f_{\alpha}, \pi_{\alpha}^f) = (\tilde{\psi}_{S}, \tilde{\pi}_{S}).$$			
		\end{lemma}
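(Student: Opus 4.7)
The plan is to unwind each piece of the right-hand side $(\tilde{\psi}_S, \tilde{\pi}_S)$ using the two corollaries on linear transformations of lattice-free splits (\autoref{cor:alphaCutInvariance} and \autoref{cor:trivialGeoLifting}), reducing the $(n{+}1)$-dimensional Minkowski gauge on $\bar{S}((\alpha,1),(f,0))$ to a one-dimensional gauge on $\bar{S}(1,\alpha f)$, and then invoking \autoref{fact:GMILifting} to recognize this one-dimensional gauge as the GMI function.

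For the $\psi$-component, I start from the definition $\tilde{\psi}_S(r) = \minkEmpty_{\bar{S}((\alpha,1),(f,0))}\binom{r}{0}$. Applying \autoref{cor:alphaCutInvariance} with $w=0$ rewrites this as $\minkEmpty_{\bar{S}((1,1),(\alpha f,0))}\binom{\alpha r}{0}$, and then \autoref{cor:trivialGeoLifting} (again with $w=0$) collapses it to $\minkEmpty_{\bar{S}(1,\alpha f)}(\alpha r) = \psi_{S(1,\alpha f)}(\alpha r)$. By the $\psi$-part of \autoref{fact:GMILifting}, this is exactly $\psi^{\alpha f}_{\GMI}(\alpha r) = \psi^f_{\alpha}(r)$.

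For the $\pi$-component, the computation is analogous but one carries the integer ``lift variable'' $\ell \in \Z$ through. Starting from $\tilde{\pi}_S(q) = \inf_{\ell \in \Z} \minkEmpty_{\bar{S}((\alpha,1),(f,0))}\binom{q}{\ell}$, \autoref{cor:alphaCutInvariance} turns this into $\inf_{\ell \in \Z} \minkEmpty_{\bar{S}((1,1),(\alpha f,0))}\binom{\alpha q}{\ell}$, and \autoref{cor:trivialGeoLifting} collapses it to $\inf_{\ell \in \Z} \minkEmpty_{\bar{S}(1,\alpha f)}(\alpha q + \ell)$. By \autoref{lemma:min}, this infimum is attained at some integer $w^*$, so it equals $\min_{w \in \Z} \minkEmpty_{\bar{S}(1,\alpha f)}(\alpha q + w) = \min_{w \in \Z} \pi_{S(1,\alpha f)}(\alpha q + w)$, which by the $\pi$-part of \autoref{fact:GMILifting} equals $\pi^{\alpha f}_{\GMI}(\alpha q) = \pi^f_{\alpha}(q)$.

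I do not expect any genuine obstacle here: all the heavy lifting (no pun intended) has been done in the preparatory lemmas of \autoref{sec:tranfLatticeFree}, so the proof is essentially a bookkeeping exercise chaining two coordinate changes with the known one-dimensional lifting identity for GMI. The one point requiring a sentence of care is to justify that the infimum over $\ell \in \Z$ in the definition of $\tilde{\pi}_S$ is actually a minimum (needed to match the ``$\min_w$'' in \autoref{fact:GMILifting}); this is precisely what \autoref{lemma:min} gives, applied to the centered split $\bar{S}(1,\alpha f)$ with $X = \Z$.
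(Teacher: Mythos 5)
Your proof is correct and follows essentially the same route as the paper: both chain Corollaries \autoref{cor:trivialGeoLifting} and \autoref{cor:alphaCutInvariance} with Fact \autoref{fact:GMILifting}, only you read the chain of equalities from $(\tilde{\psi}_S,\tilde{\pi}_S)$ toward $(\psi^f_\alpha,\pi^f_\alpha)$ while the paper reads it in the reverse direction. Your extra remark invoking Lemma \autoref{lemma:min} to turn the infimum over $\ell\in\Z$ into a minimum is a harmless (and slightly more careful) addition, though it already follows from the minimum form asserted in Fact \autoref{fact:GMILifting}.
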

		
		\begin{proof}
			Recall that from the definition of $\alpha$-cuts and Fact \autoref{fact:GMILifting} that $\psi_\alpha^f(r) = \psi_{\GMI}^{\alpha f}(\alpha r) = \mink{\bar{S}(1, \alpha f)}{\alpha r}$. Then using Corollary \autoref{cor:trivialGeoLifting} with $u = 1$ and $w=0$ and Corollary \autoref{cor:alphaCutInvariance} with $w = 0$, we get that $$\psi_\alpha^f(r) = \mink{\bar{S}(1, \alpha f)}{\alpha r} \stackrel{\textrm{Cor \autoref{cor:trivialGeoLifting}}}{=} \minkEmpty_{\bar{S}((1,1), (\alpha f,0))}{\alpha r \choose 0} \stackrel{\textrm{Cor \autoref{cor:alphaCutInvariance}}}{=} \minkEmpty_{\bar{S}((\alpha,1), (f,0))}{r \choose 0} = \tilde{\psi}_{S((\alpha, 1), (f,0))}(r).$$ 
			
			Similarly, recall that $$\pi_\alpha^f(q) = \pi_{\GMI}^{\alpha f}(\alpha q) = \min_{w \in \Z} \psi_{\GMI}^{\alpha f}(\alpha q + w) = \min_{w \in \Z} \mink{\bar{S}(1, \alpha f)}{\alpha q + w}.$$ Then employing Corollaries \autoref{cor:trivialGeoLifting} and \autoref{cor:alphaCutInvariance} we get that $$\pi_\alpha^f(q) =  \min_{w \in \Z} \mink{\bar{S}(1, \alpha f)}{\alpha q + w} \stackrel{\textrm{Cor \autoref{cor:trivialGeoLifting}}}{=}  \min_{w \in \Z} \minkEmpty_{\bar{S}((1,1), (\alpha f,0))}{\alpha q \choose w} \stackrel{\textrm{Cor \autoref{cor:alphaCutInvariance}}}{=} \min_{w \in \Z} \minkEmpty_{\bar{S}((\alpha,1), (f,0))}{q \choose w} = \tilde{\pi}_{S((\alpha, 1), (f,0))}(q),$$ 
		concluding the proof of the lemma.
		\end{proof}


	\subsection{Proof of Theorem \autoref{thm:equivalence}}
%
		
			First we prove that every $\alpha$-cut is a split cut for $\corner(f, R, Q)$. Consider some $\bar{\alpha} \in \Z_f$ and the cut $(\psi_{\bar{\alpha}}^f, \pi_{\bar{\alpha}}^f)$. From Lemma \autoref{lemma:kCutLifting} we have that $(\psi_{\bar{\alpha}}^f, \pi_{\bar{\alpha}}^f)$ equals $(\tilde{\psi}_S, \tilde{\pi}_S)$ for $S = S((\bar{\alpha}, 1), (f, 0))$. The validity of the latter for $\corner(f, R, Q)$ then implies the validity of the former,  and using the ``if'' part of Lemma \autoref{lemma:equivSplitLifted} concludes the proof of this part.
			
			For the second part, consider a split cut $(\psi, \pi)$ for $\corner(f, R, Q)$. From Lemma \autoref{lemma:equivSplitLifted} there exists $\alpha$ in $\Z_f$ such that $(\psi, \pi)$ is dominated by $(\tilde{\psi}_{S((\alpha, 1), (f,0))}, \tilde{\pi}_{S((\alpha, 1), (f, 0))})$ with respect to $\lpCorner(f, R, Q)$. By Lemma~\autoref{lemma:kCutLifting}, this implies that $(\psi, \pi)$ is dominated by $(\psi^f_\alpha, \pi^f_\alpha)$ with respect to $\lpCorner(f, R, Q)$.
		

	
	\section{Applications}
	
		\subsection{Universality of Infinite Relaxation with Respect to Split Closure}
	
		We now use Theorem \autoref{thm:equivalence} to prove Theorem \autoref{thm:universalIR}. Before we do so we need the following technical lemma that gives a more natural characterization of the domination relationship between valid cuts.
		
		\afterThesis{
		\mmnote{The proof of the next lemma is pretty pedestrian and uses the assumption that one of the cuts is non-negative; probably the general statement can be proved using conic duality, but might be too much trouble}}
		
		\begin{lemma} \label{lemma:dominanceCorner}
			Consider a non-empty corner relaxation $\corner = \corner(f, R, Q)$. Let $(\psi, \pi)$ and $(\psi', \pi')$ be valid cuts for $\corner$. Then $$\corner_{LP}(f, R, Q)\cap (\psi, \pi) \subseteq \corner_{LP}(f, R, Q)\cap (\psi', \pi')\quad \Longleftrightarrow \quad(\psi, \pi) \le (\psi', \pi').$$
In other words, $(\psi, \pi)$ dominates $(\psi', \pi')$ with respect to $\corner_{LP}(f, R, Q)$ iff $(\psi, \pi) \le (\psi', \pi')$. 
		\end{lemma}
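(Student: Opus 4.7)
The plan is to prove both directions separately, with the easy direction being immediate from non-negativity and the harder direction proceeding by contrapositive via an explicit witness construction in $\lpCorner$.

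For $(\Leftarrow)$, suppose $\psi \le \psi'$ and $\pi \le \pi'$ pointwise. Any $(x,s,y) \in \lpCorner(f,R,Q)$ satisfies $s \in \pFiniteSupp{R}$ and $y \in \pFiniteSupp{Q}$, so all components of $s$ and $y$ are non-negative. Thus if $\sum_{r} \psi(r) s(r) + \sum_q \pi(q) y(q) \ge 1$, then certainly $\sum_r \psi'(r) s(r) + \sum_q \pi'(q) y(q) \ge 1$, giving the desired inclusion.

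For $(\Rightarrow)$, I would argue by contrapositive. Suppose $(\psi,\pi) \not\le (\psi',\pi')$, so there exists either $r^* \in R$ with $\psi(r^*) > \psi'(r^*)$ or $q^* \in Q$ with $\pi(q^*) > \pi'(q^*)$. Consider the first case; since cut functions are non-negative by definition, $\psi'(r^*) \ge 0$ forces $\psi(r^*) > 0$. Now construct the point $(x^*, s^*, y^*)$ defined by $s^*(r^*) = 1/\psi(r^*)$, $s^*(r) = 0$ for $r \neq r^*$, $y^* \equiv 0$, and $x^* = f + r^*/\psi(r^*)$. This point lies in $\lpCorner(f,R,Q)$ (the defining equation holds by construction, and $s^*, y^* \ge 0$), and it satisfies $(\psi,\pi)$ with equality since $\sum_r \psi(r) s^*(r) + \sum_q \pi(q) y^*(q) = \psi(r^*)/\psi(r^*) = 1$. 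However, it violates $(\psi',\pi')$, since $\sum_r \psi'(r) s^*(r) + \sum_q \pi'(q) y^*(q) = \psi'(r^*)/\psi(r^*) < 1$. This is a point of $\lpCorner \cap (\psi,\pi)$ not in $\lpCorner \cap (\psi',\pi')$, contradicting the assumed inclusion. The case $\pi(q^*) > \pi'(q^*)$ is handled symmetrically by setting $y^*(q^*) = 1/\pi(q^*)$ with all other components zero and $x^* = f + q^*/\pi(q^*)$.

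There is essentially no obstacle here: the only mild care required is in verifying that the chosen $\psi(r^*)$ (or $\pi(q^*)$) is strictly positive so that the scaling $1/\psi(r^*)$ is well-defined, and this follows automatically from the non-negativity built into the definition of valid cuts. The non-emptiness hypothesis on $\corner$ is not even used in the argument, since $\lpCorner$ always contains $(f,0,0)$ and we construct our witnesses directly.
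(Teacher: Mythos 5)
Your proof is correct and follows essentially the same route as the paper: the easy direction from non-negativity of $s,y$, and the contrapositive via the witness $\bigl(f + r^*/\psi(r^*),\ \mathbbm{1}_{r^*}/\psi(r^*),\ 0\bigr)$, which is exactly the paper's construction. The only difference is that the paper additionally treats the case $\psi'(\bar r)<0$ (using the non-emptiness of $\corner$ and a ray argument), a case you correctly observe is vacuous under the paper's own definition of valid cuts as non-negative function pairs, which also explains why you never need the non-emptiness hypothesis.
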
\afterThesis{\mmnote{Be careful: proof uses non-negativity of $(\psi, \pi)$ (which is instantiated with $k$-cuts, so we are fine anyway)}}
		
	\begin{proof}	The ``if'' part is trivial, so we prove only the ``only if'' part. We prove the contrapositive:  assume that $(\psi, \pi) \not\le (\psi', \pi')$ and show that $\corner_{LP}(f, R, Q)\cap (\psi, \pi) \not\subseteq \corner_{LP}(f, R, Q)\cap (\psi', \pi')$. We consider the case where $\psi \not\le \psi'$, the other one is analogous. So let $\bar{r} \in R$ be such that $\psi(\bar{r}) > \psi'(\bar{r})$. 
			
			First consider the case where $\psi'(\bar{r}) \ge 0$, and hence $\psi(\bar{r}) > 0$. Then we construct the solution $$(\bar{x},\bar{s},\bar{y}) = \left(f + \frac{1}{\psi(\bar{r})} \bar{r}, \ \ \frac{\mathbbm{1}_{\bar{r}}}{\psi(\bar{r})}, \ \ 0\right),$$ where $\mathbbm{1}_{\bar{r}} : R \rightarrow \R$ is the function taking value $\mathbbm{1}_{\bar{r}}(\bar{r}) = 1$ and $\mathbbm{1}_{\bar{r}}(r) = 0$ for all other $r$. Notice that $(\bar{x},\bar{s},\bar{y})$ belongs to $\corner_{LP}(f, R, Q) \cap (\psi, \pi)$ but not to $\corner_{LP}(f, R, Q) \cap (\psi', \pi')$. This concludes the proof in this case.
		
			Now consider the case where $\psi'(\bar{r}) < 0$. Let $(\bar{x}, \bar{s}, \bar{y})$ be a feasible solution for $\corner(f, R, Q)$; in particular we have $(\bar{x}, \bar{s}, \bar{y}) \in \corner_{LP}(f, R, Q) \cap (\psi, \pi)$. Now let $\lambda > 0$ and consider the solution 
			\begin{align*}
				(x', s', y') = (\bar{x} + \lambda \bar{r}, \bar{s} + \lambda \mathbbm{1}_{\bar{r}}, \bar{y}).
			\end{align*}
			Since valid functions are non-negative by assumption (and in particular $\psi(\bar{r}) \ge 0$), notice that we still have $(x', s', y') \in \corner_{LP}(f, R, Q) \cap (\psi, \pi)$ for all $\lambda \geq 0$. However, since $\psi'(\bar{r}) < 0$, setting $\lambda$ large enough gives that $(x', s', y')$ does not belong to $\corner_{LP}(f, R, Q) \cap (\psi', \pi')$. This concludes the proof of the lemma. \end{proof}


		\begin{proofof}{Theorem \autoref{thm:universalIR}}
			Let $(\psi, \pi)$ be a split cut for $\corner(f, R, Q) \neq \emptyset$. The second part of Theorem \autoref{thm:equivalence} guarantees that there is $\bar{\alpha} \in \Z_f$ such that $(\psi^f_{\bar{\alpha}}|_{R}, \pi^f_{\bar{\alpha}}|_{Q})$ dominates $(\psi, \pi)$ with respect to $\corner_{LP}(f, R, Q)$. Lemma \autoref{lemma:dominanceCorner} above then gives that $(\psi^f_{\bar{\alpha}}|_{R}, \pi^f_{\bar{\alpha}}|_{Q}) \le (\psi, \pi)$.
			
			Then define the cut $(\psi', \pi')$ for $\corner(f, \R^n, \R^n)$ such that $(\psi'|_{R}, \pi'|_{Q}) = (\psi, \pi)$ and $(\psi'|_{\R \setminus R}, \pi'|_{\R \setminus Q}) = (\psi^f_{\bar{\alpha}}|_{\R \setminus R}, \pi^f_{\bar{\alpha}}|_{\R \setminus Q})$. We then have that: (i) $(\psi', \pi')$ is a split cut for $\corner(f, R, Q)$, because $(\psi', \pi') \ge (\psi^f_{\bar{\alpha}}, \pi^f_{\bar{\alpha}})$ and $(\psi^f_{\bar{\alpha}}, \pi^f_{\bar{\alpha}})$ is a split cut for $\corner(f, R, Q)$ (by the first part of Theorem \autoref{thm:equivalence}); (ii) $(\psi, \pi)$ is a restriction of $(\psi', \pi')$. This concludes the proof of the theorem. 
		\end{proofof}

	
	\subsection{Pure Integer Program with Weak Split Closure}
	
	In this section we prove Theorem \autoref{thm:badIP} by exhibiting a pure integer corner relaxation with a weak split closure; throughout, we will only work with corner relaxations $\corner(f, R, Q)$ where both $R$ and $Q$ are finite sets of rational vectors.
		
	Recall that the split closure of a corner relaxation $\corner(f, R, Q)$, denoted by $\sClosure(f, R, Q)$, is the set of all points in $\lpCorner(f, R, Q)$ that satisfy all split cuts for $\corner(f, R, Q)$. Using Theorem \autoref{thm:equivalence} we can describe this closure as 
	\begin{align}
		\sClosure(f, R, Q) = \lpCorner(f, R, Q) \cap \left(\bigcap_{\alpha \in \Z_f} (\psi^f_\alpha, \pi^f_\alpha) \right). \label{eq:splitClosureKCuts}
	\end{align}
	
	
	\paragraph{Bad example} Now we present the family of integer corner polyhedron with a weak split closure. Consider the following family parameterized by a rational number $\epsilon > 0$:
	\begin{align*} \label{badIP} \tag{$\corner_\epsilon$}
		x = \frac{1}{2} + \left(\frac{1}{2} + \frac{\epsilon}{2}\right) y_1 + \left(\frac{1}{2} + \epsilon\right) y_2\\
		y_1, y_2 \ge 0 \\
		x, y_1, y_2 \in \mathbb{Z}.
	\end{align*}
	To make things more clear, we define $\bar{f} = 1/2$, $\bar{q}^1 = (\frac{1}{2} + \frac{\epsilon}{2})$ and $\bar{q}^2 = (\frac{1}{2} + \epsilon)$, so formally $\corner_\epsilon = \corner(\bar{f}, \emptyset, \{\bar{q}^1, \bar{q}^2\})$.
	
	Let $\sClosure_\epsilon$ denote the split closure of $\corner_\epsilon$; we claim that there is a gap of $1/12 \epsilon$ between these two sets when minimizing $y_1 + y_2$, i.e., they satisfy Theorem \autoref{thm:badIP}. The intuition behind this construction is the following. The equivalence in Theorem \autoref{thm:equivalence}, via equation \eqref{eq:splitClosureKCuts} allow us to focus solely on understanding the effect of $\alpha$-cuts on the above program. Since the latter only has integrally constrained non-basic variables, this means focusing on the functions $\pi^{\bar{f}}_{\alpha}$. Observing the behavior of the function $\pi_{\alpha}^{\bar{f}}$, we see that it essentially has ``high'' value (close to 1) for inputs close to $1/2$; while this is not exactly true for large $\alpha \in \Z_{\bar{f}}$, our particular choice of $\bar{q}^1$ and $\bar{q}^2$ guarantees that \emph{one of} $\pi^{\bar{f}}_\alpha(\bar{q}^1)$ or $\pi^{\bar{f}}_\alpha(\bar{q}^2)$ is reasonably large for every choice of $\alpha \in \Z_{\bar{f}}$.
	
	\begin{lemma}
		For every $\epsilon > 0$, we have that $\max\left\{\pi^{\bar{f}}_{\alpha}(\bar{q}^1), \pi^{\bar{f}}_{\alpha}(\bar{q}^2)\right\} \ge 1/3$ for all $\alpha \in \Z_{\bar{f}}$. 
	\end{lemma}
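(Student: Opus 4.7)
My approach is first to observe that $\alpha \in \Z_{\bar f}$ is equivalent to $\alpha$ being an odd integer (since $\bar f = 1/2$), so $[\alpha \bar f] = 1/2$. Specializing the definition, $\pi^{\bar f}_\alpha(q) = \pi^{\alpha \bar f}_{\GMI}(\alpha q)$ becomes a simple ``tent'' function of the fractional part $[\alpha q]$: it attains its maximum value $1$ when $[\alpha q] = 1/2$ and decreases piecewise-linearly toward $0$ as $[\alpha q]$ approaches $0$ or $1$. Under this simplification, $\pi^{\bar f}_\alpha(\bar q^i) \ge 1/3$ is equivalent to $[\alpha \bar q^i] \in [1/6, 5/6]$, and the lemma reduces to showing that at least one of $[\alpha \bar q^1]$, $[\alpha \bar q^2]$ lies in the ``middle interval'' $[1/6, 5/6]$.

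Setting $t := \alpha \epsilon / 2$ and using $\bar q^1 = 1/2 + \epsilon/2$, $\bar q^2 = 1/2 + \epsilon$ together with the parity of $\alpha$, one has $u := [\alpha \bar q^1] = [1/2 + t]$ and $v := [\alpha \bar q^2] = [1/2 + 2t]$ (with $[\cdot]$ the fractional part as in the paper). So the target becomes the purely one-variable, modulo-$1$ statement: for every real $t$, at least one of $u, v$ lies in $[1/6, 5/6]$.

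I would prove this by contradiction. Assume $u \notin [1/6, 5/6]$, so $[1/2 + t] \in [0, 1/6) \cup (5/6, 1)$; unwinding, this forces $t \bmod 1 \in (1/3, 2/3)$. I would then split into two subcases according to whether $t \bmod 1$ lies in $(1/3, 1/2]$ or $[1/2, 2/3)$, and in each case compute $v = [1/2 + 2t]$ explicitly; the ``doubling'' effect pushes $v$ into the middle third $(1/6, 5/6)$, which gives $\pi^{\bar f}_\alpha(\bar q^2) > 1/3$ and a contradiction. The main (and only) obstacle is bookkeeping of the fractional parts; no deeper technical difficulty is expected. Geometrically, the whole argument is nothing more than checking that the affine line $v \equiv 2u - 1/2 \pmod 1$ on the torus $[0, 1)^2$ misses the ``bad square'' $\bigl([0, 1/6) \cup (5/6, 1)\bigr)^2$.
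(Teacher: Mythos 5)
Your proposal is correct and follows essentially the same route as the paper: both reduce the lemma, via the explicit ``tent'' form of $\pi^{\bar f}_\alpha$ when $[\alpha\bar f]=1/2$, to the claim that one of $[\alpha\bar q^1],[\alpha\bar q^2]$ lies in $[1/6,5/6]$, and both prove that claim by the same two-case analysis on whether $\alpha\epsilon/2 \bmod 1$ falls in $(1/3,1/2]$ or $[1/2,2/3)$. The only difference is cosmetic (you argue by contradiction where the paper proves the contrapositive implication $[\alpha\epsilon/2]\in(1/3,2/3)\Rightarrow[\alpha\epsilon]\notin(1/3,2/3)$).
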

	
	\begin{proof}
		By our choice of $\bar{f} = 1/2$, notice that for every $\alpha \in \Z_{\bar{f}}$ we have that the fractional part $[\alpha \bar{f}]$ equals $1/2$. 
Thus, using the definition of $\pi^{\bar{f}}_\alpha$ given by equations \eqref{eq:defGMIPi} and \eqref{eq:defAlphaCutPi}, we get that $\pi^{\bar{f}}_\alpha$ takes the form
		\begin{align}
			\pi^{\bar{f}}_{\alpha}(q) = \left\{ \begin{array}{lr} 2 [\alpha q] & \textrm{, if } [\alpha q] \le \frac{1}{2} \\ 2 - 2 [\alpha q] & \textrm{, if } [\alpha q] > \frac{1}{2}  \end{array}\right. \label{eq:explicitKCut}
		\end{align}
	The next claim gives some control on the behavior of the fractional part $[\alpha \bar{q}^i]$.	
	\begin{myclaim} \label{claim:alphaQ}
		For every $\alpha \in \Z_{\bar{f}}$ either $[\alpha \bar{q}^1]$ or $[\alpha \bar{q}^2]$ lies in the interval $[1/6,5/6]$. 		
	\end{myclaim}
	
\begin{proof} Take $\alpha \in \Z_{\bar{f}}$. Since $\bar f = \frac12$, $\alpha$ is an odd integer. Therefore, we have that $[\alpha \bar{q}^1] = [1/2 + \frac{\alpha\epsilon}{2}] = [1/2 + [\frac{\alpha\epsilon}{2}]]$. Using the latter, it is easy to verify that $[\alpha \bar{q}^1] \in [1/6, 5/6]$ if and only if $[k\epsilon /2] \notin (1/3, 2/3)$. Similarly, we have that $[\alpha \bar{q}^2] \in [1/6, 5/6]$ if and only if $[k\epsilon] \notin (1/3, 2/3)$.
		
		To conclude the proof, it suffices to show that $[\alpha \epsilon/2] \in (1/3, 2/3)$ implies $[\alpha \epsilon] \notin (1/3,2/3)$. For that notice that when $[\alpha \epsilon/2] \in (1/3,1/2]$, this implies $[\alpha \epsilon] \in (2/3, 1]$ and hence $[\alpha \epsilon] \notin (1/3, 2/3)$. On the other hand, when $[\alpha \epsilon/2] \in [1/2,2/3)$ we have $[\alpha \epsilon] \in [0, 1/3)$, again reaching the conclusion $[\alpha \epsilon] \notin (1/3, 2/3)$. \end{proof}

	Take $\alpha \in \Z_{\bar{f}}$ and use the above claim to get $i \in \{1, 2\}$ such that $[\alpha \bar{q}^i] \in [1/6, 5/6]$. If $[\alpha \bar{q}^i] \in [1/6, 1/2]$, then employing equation \eqref{eq:explicitKCut} we get that $\pi^{\bar{f}}_\alpha(\bar{q}^i) = 2 [\alpha \bar{q}^i] \ge 1/3$; if $[\alpha \bar{q}^i] \in [1/2, 5/6]$, the same equation gives $\pi^{\bar{f}}_\alpha(\bar{q}^i) = 2 - 2 [\alpha \bar{q}^i] \ge 2 - 10/6 = 1/3$. This concludes the proof of the lemma.
	\end{proof}	

	Using the above lemma, we see that the point $(\bar{x}, \bar{y}_1, \bar{y}_2)$ given by $\bar{y}_1 = \bar{y}_2 = 3$ and $\bar{x} = \bar{f} + \bar{q}^1 \bar{y}_1 + \bar{q}^2 \bar{y}_2$ belongs to the linear relaxation of \eqref{badIP} and satisfies all $\alpha$-cuts for it; therefore, this point belongs to the split closure of $\sClosure_\epsilon$. This directly gives the following. 
	
	 \begin{lemma} \label{lemma:badSplit}
	 		The optimal value of minimizing $y_1 + y_2$ over the split closure $\sClosure_\epsilon$ is at most $6$.
	 \end{lemma}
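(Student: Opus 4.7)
The plan is to exhibit a concrete feasible point of $\sClosure_\epsilon$ whose objective value equals $6$. The natural candidate is the one already flagged in the paragraph preceding the lemma: set $\bar{y}_1 = \bar{y}_2 = 3$ and $\bar{x} = \bar{f} + \bar{q}^1 \bar{y}_1 + \bar{q}^2 \bar{y}_2$, which gives $\bar{y}_1 + \bar{y}_2 = 6$. I would then verify two things: (i) this point lies in $\lpCorner_\epsilon = \lpCorner(\bar{f}, \emptyset, \{\bar{q}^1, \bar{q}^2\})$, and (ii) it satisfies every split cut for $\corner_\epsilon$.

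For (i), feasibility in the linear relaxation is immediate: the equation $\bar{x} = \bar{f} + \bar{q}^1 \bar{y}_1 + \bar{q}^2 \bar{y}_2$ holds by construction and $\bar{y}_1, \bar{y}_2 \ge 0$. For (ii), I would invoke the characterization of the split closure given by \eqref{eq:splitClosureKCuts}: it suffices to check that $(\bar{x}, \bar{y}_1, \bar{y}_2)$ satisfies $(\psi^{\bar f}_\alpha, \pi^{\bar f}_\alpha)$ for every $\alpha \in \Z_{\bar f}$. Since $R = \emptyset$ in $\corner_\epsilon$, this reduces to showing
\[
\pi^{\bar{f}}_\alpha(\bar{q}^1)\,\bar{y}_1 + \pi^{\bar{f}}_\alpha(\bar{q}^2)\,\bar{y}_2 \;=\; 3\bigl(\pi^{\bar{f}}_\alpha(\bar{q}^1) + \pi^{\bar{f}}_\alpha(\bar{q}^2)\bigr) \;\ge\; 1
\]
for every $\alpha \in \Z_{\bar f}$. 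This is where the previous lemma does the work: it guarantees $\max\{\pi^{\bar{f}}_\alpha(\bar{q}^1), \pi^{\bar{f}}_\alpha(\bar{q}^2)\} \ge 1/3$, and the $\alpha$-cut coefficients are non-negative, so the left-hand side is at least $3 \cdot 1/3 = 1$, exactly as needed.

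There is really no obstacle here beyond packaging: the main lemma on $\pi^{\bar{f}}_\alpha$ values has already been proved, and the rest is a one-line arithmetic check plus the observation that $R = \emptyset$ eliminates the $\psi$ part of the cut. Once validity against every $\alpha$-cut is established, Theorem \autoref{thm:equivalence} (in the form of \eqref{eq:splitClosureKCuts}) immediately places $(\bar{x}, \bar{y}_1, \bar{y}_2)$ in $\sClosure_\epsilon$, and since the objective at this point is $6$, the minimum is bounded above by $6$.
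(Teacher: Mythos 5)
Your proposal is correct and follows exactly the paper's argument: both exhibit the point with $\bar{y}_1 = \bar{y}_2 = 3$, verify it lies in the linear relaxation, and use the preceding lemma together with the characterization \eqref{eq:splitClosureKCuts} to conclude that every $\alpha$-cut is satisfied since $3\max\{\pi^{\bar{f}}_\alpha(\bar{q}^1), \pi^{\bar{f}}_\alpha(\bar{q}^2)\} \ge 1$. Your write-up is in fact slightly more explicit than the paper's one-line justification.
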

	
	Now in order to show the weakness of the split cuts, we show that the optimal value of minimizing $y_1 + y_2$ over the whole of $\corner_\epsilon$ is much larger. 
	
	\begin{lemma} \label{lemma:strongIP}
		The optimal value of minimizing $y_1 + y_2$ over $\corner_\epsilon$ is at least $1/2\epsilon$.
	\end{lemma}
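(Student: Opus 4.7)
The plan is to rewrite the integrality constraint on $x$ as a purely arithmetic condition on $y_1, y_2$, and then conclude with a very short chain of inequalities. Multiplying the defining equation of $\corner_\epsilon$ by $2$ and rearranging, the condition that $x \in \Z$ is equivalent to requiring
$$(y_1 + y_2) + \epsilon(y_1 + 2 y_2) \in 1 + 2\Z,$$
i.e.\ the left-hand side must be an odd integer. This reduces the problem to a statement about non-negative integers $y_1, y_2$ satisfying this congruence.

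The crux of the argument is to observe that, since $y_1 + y_2 \in \Z$, the quantity $N := \epsilon(y_1 + 2 y_2)$ must itself be an integer, and in fact $N \geq 1$: indeed, $N = 0$ would force $y_1 + 2y_2 = 0$ and hence $y_1 = y_2 = 0$, making the displayed sum equal to $0$, which is even and contradicts oddness. Therefore $\epsilon(y_1 + 2 y_2) \geq 1$, so $y_1 + 2 y_2 \geq 1/\epsilon$.

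To close, I will use the hypothesis $y_1 \geq 0$ to obtain the trivial estimate $2(y_1 + y_2) = (y_1 + 2y_2) + y_1 \geq y_1 + 2y_2 \geq 1/\epsilon$, yielding $y_1 + y_2 \geq 1/(2\epsilon)$, which is exactly the bound claimed in the lemma. There is no real obstacle in this proof; the only mildly subtle point is ruling out the degenerate case $N = 0$, and everything else is mechanical once the integrality condition has been put in the form above.
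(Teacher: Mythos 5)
Your proof is correct and follows essentially the same route as the paper's: both arguments use $x\in\Z$ together with $y_1+y_2\in\Z$ to deduce that $\epsilon(y_1+2y_2)$ is a non-negative integer (the paper phrases this as $\epsilon(y_1/2+y_2)\in\Z_+/2$), rule out the value $0$ via the degenerate case $y_1=y_2=0$, and then weaken $y_1+2y_2\ge 1/\epsilon$ to $y_1+y_2\ge 1/(2\epsilon)$. The only difference is the cosmetic rescaling by $2$ and the parity phrasing.
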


	\begin{proof}
		We can rewrite the equation in \eqref{badIP} as $$x = \frac{1}{2} (1 + y_1 + y_2) + \epsilon \left( \frac{y_1}{2} + y_2 \right).$$ Since for every solution in $\corner_\epsilon$ we have $y_1, y_2 \in \Z_+$, this implies that $\frac{1}{2}(1 + y_1 + y_2) \in \Z_+/2$; since in such solution we also have $x \in \Z$, this implies that $\epsilon (y_1/2 + y_2) \in \Z_+/2$. Moreover, we need to have one of $y_1$ or $y_2$ strictly positive (and the other non-negative), so the previous observation actually implies that $\epsilon (y_1/2 + y_2) \ge 1/2$ is satisfied by all feasible solutions. Such solutions then satisfy the weaker inequality $y_1 + y_2 \ge 1/2 \epsilon$; this concludes the proof of the lemma.
	\end{proof}
		
	Theorem~\autoref{thm:badIP} then follows directly from Lemmas \autoref{lemma:badSplit} and \autoref{lemma:strongIP}.

	\bigskip
	\noindent \textbf{Acknowledgments:} We would like to thank Gerard Cornu\'{e}jols, Sanjeeb Dash, Santanu Dey, Fran\c{c}ois Margot and R. Ravi for comments on the paper. 


	\bibliographystyle{plain}
	\bibliography{ip}		
		
\end{document}